\newtheorem{theorem}{Theorem}[section]
\newtheorem{lemma}[theorem]{Lemma}
\newtheorem{proposition}[theorem]{Proposition}
\newtheorem{corollary}[theorem]{Corollary}
\newtheorem{definition}[theorem]{Definition}
\theoremstyle{remark}
\newtheorem{remark}[theorem]{Remark}
\numberwithin{equation}{section} 
\newcommand*{\N}{\ensuremath{\mathbb{N}}}
\newcommand*{\Z}{\ensuremath{\mathbb{Z}}}
\newcommand*{\R}{\ensuremath{\mathbb{R}}}
\newcommand*{\C}{\ensuremath{\mathbb{C}}}
\newcommand*{\Per}{\ensuremath{\mathbf{P}}}
\newcommand*{\Tra}{\ensuremath{\mathbf{T}}}
\newcommand*{\Sca}{\ensuremath{\mathbf{S}}}
\DeclareMathOperator{\BV}{\operatorname{BV}}
\definecolor{mycolor1}{RGB}{3, 165, 39} 
\definecolor{mycolor2}{RGB}{31, 75, 168} 
\definecolor{mycolor3}{RGB}{252, 152, 22} 
\definecolor{mycolor4}{RGB}{182, 26, 10} 
\title{Fourier series windowed by a bump function}
\author{Paul Bergold}
\address{(Paul Bergold) Zentrum Mathematik, Technische Universit\"at M\"unchen, Germany}
\email{bergold@ma.tum.de}
\author{Caroline Lasser}
\address{(Caroline Lasser) Zentrum Mathematik, Technische Universit\"at M\"unchen, Germany}
\email{classer@ma.tum.de}
\date{\today}
\keywords{Fourier series; window function; bump function}
\subjclass[2010]{42A16}
\begin{document}
\maketitle

\begin{abstract}
We study the Fourier transform windowed by a bump function.
We transfer Jackson's classical results on the convergence of the Fourier series of a periodic function to windowed series of a not necessarily periodic function.
Numerical experiments illustrate the obtained theoretical results.
\end{abstract}
\section{Introduction}\label{sec:introduction}
The theory of Fourier series plays an essential role in numerous applications of contemporary mathematics.
It allows us to represent a periodic function in terms of complex exponentials.
Indeed, any
\textcolor{black}{
square integrable function $f\colon\R\to\C$ of period $2\pi$ has a norm-convergent Fourier series such that (see e.g. \cite[Prop.~4.2.3.]{Butzer:1971aa})}
\begin{align*}
	f(x) 
	=\sum_{k=-\infty}^\infty \widehat f(k) e^{ikx}
	\quad\text{\textcolor{black}{almost everywhere,}}
\end{align*}
where the Fourier coefficients are defined according to
\begin{align*}
	\widehat f(k) 
	:=\frac{1}{2\pi}\int_{-\pi}^\pi f(x)e^{-ikx}\ \mathrm{d}x,
	\quad k\in\Z.
\end{align*}
By the classical results of Jackson in 1930, see \cite{Jackson:1994aa}, the decay rate of the Fourier coefficients and therefore the convergence speed of the Fourier series depend on the regularity of the function.
If $f$ has a jump discontinuity, then the order of magnitude of the coefficients is $\mathcal{O}(1/|k|)$, as $|k|\to \infty$.
Moreover, if $f$ is a smooth function of period $2\pi$, say $f\in C^{s+1}(\R)$ for some $s\ge 1$, then the order improves to $\mathcal{O}(1/|k|^{s+1})$.\\

In the present paper we focus on the reconstruction of a not necessarily periodic function with respect to a finite interval $(-\lambda,\lambda)$.
For this purpose let us think of a smooth, non-periodic real function $\psi\colon\R\to\R$, which we want to represent by a Fourier series in $(-\lambda,\lambda)$.
Therefore, we will examine its $2\lambda$-periodic extension, see Figure~\ref{fig:periodization}.
Whenever $\psi(-\lambda^+)\ne \psi(\lambda^-)$, the periodization has a jump discontinuity at $\lambda$, and thus the Fourier coefficients are $\mathcal{O}(1/|k|)$.
An easy way to eliminate these discontinuities at the boundary, is to multiply the original function by a smooth window, compactly supported in $[-\lambda,\lambda]$.
The resulting periodization has no jumps.
Consequently, one expects faster convergence of the windowed Fourier sums.\\

\begin{figure}
	\includegraphics{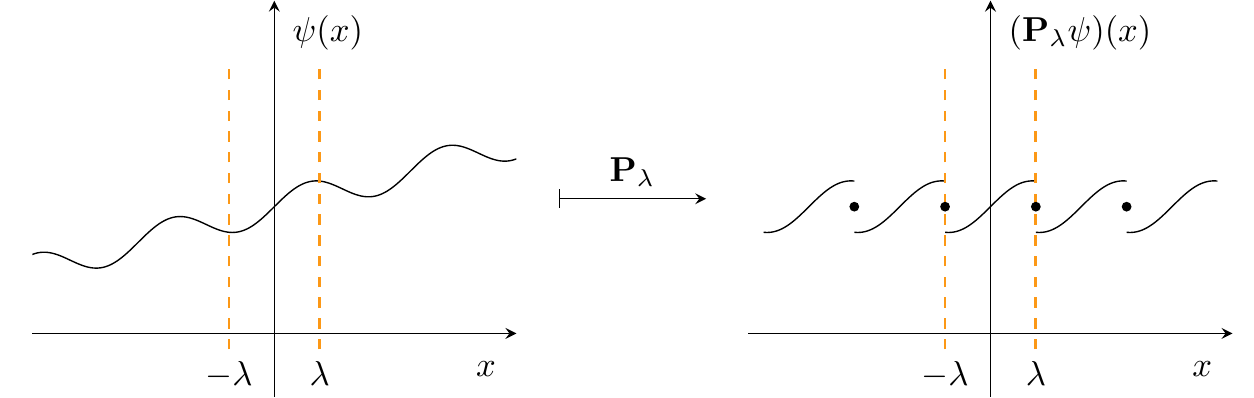}
	\caption{Effect of the periodization:
		If $\psi(-\lambda^+)\ne \psi(\lambda^-)$, then the $2\lambda$-periodic extension produces jump discontinuities at $\pm\lambda$.
		Consequently, the order of the Fourier coefficients is $\mathcal{O}(1/|k|)$.
		}
	\label{fig:periodization}	
\end{figure}

The concept of windowed Fourier atoms has been introduced by Gabor in 1946, \textcolor{black}{see \cite{Gabor:1946aa}}.
According to \cite[chapter 4.2]{Mallat:2009aa}, for $(x,\xi)\in\R^2$ and a symmetric window function $g\colon\R\to\R$, satisfying $\|g\|_{L^2(\R)}=1$, these atoms are given by
\begin{align*}
	g_{x,\xi}(y)
	:=e^{i\xi y}g(y-x),
	\quad y\in\R.
\end{align*}
The resulting short-time Fourier transform (STFT) of $\psi\in L^2(\R)$ is defined as
\begin{align}\label{eq:stft}
	\operatorname{STFT}(\psi)(x,\xi)
	:=\langle \psi,g_{x,\xi}\rangle_{L^2(\R)}
	=\int_{-\infty}^\infty\ \psi(y) g(y-x)e^{-i\xi y}\ \mathrm{d}y.
\end{align}
It can be understood as the Fourier transform on the real line of the windowed function $\psi\cdot g(\bullet-x)$.
If $g$ is localized in a neighborhood of $x$, then the same applies to the windowed function.
Hence, the spectrum of the STFT is connected to the windowed interval.
In particular, Gabor investigated Gaussian windows with respect to the uncertainty principle, see \cite[chapter 3.1]{Chui:1992aa}.
In many engineering applications, windows are discussed in terms of data weighting and spectral leakage.
Depending on the type of signal, numerous windows have been developed, see e.g. \cite[chapter IV]{Harris:1978aa}.
More recently, in \cite{McKechan:2010aa} a smooth \textcolor{black}{$C^\infty$}-bump window has been suggested for the analysis of gravitational waves.
\textcolor{black}{
It is an essential property of this window, that it is equal to 1 in a closed subinterval of its support (plateau).
Although the Fourier coefficients of such windows may exceed spectral convergence (faster than any fixed polynomial rate), it is their compact support which limits the order to be at most root exponential and the actual convergence rate depends on the growth of the window's derivatives, see e.g. \cite{Tadmor:2007aa}.
For example, in \cite{Boyd:2006aa} a smooth bump is designed such that the order of the windowed Fourier coefficients is root-exponential (at least for the saw wave function), wheres in \cite{Tanner:2006aa} we find a non-compactly supported window, for which we obtain true exponential decay.
We note that Boyd and Tanner focus on an optimal choice of window parameters in order to obtain the best possible approximation results.
}\\

\textcolor{black}{
We investigate the convergence speed of Fourier series windowed by compactly supported bump functions with a plateau.
The properties of these bump windows will allow an effortless transfer of Jackson's classical results on the convergence of the Fourier series for smooth functions.}
\textcolor{black}{
The main new contributions of this paper can be found in Theorem~\ref{fact:error_rep_window} and Theorem~\ref{fact:estimate_lipschitz}, respectively.
In the first one we show that pointwise multiplication (in the time domain) by a window with plateau yields smaller reconstruction errors in the interior of the plateau, compared to those windows without plateau.
We complement this result by a lower error bound for the Hann window, a member of the set of $\cos^\alpha$ functions.
In Theorem~\ref{fact:estimate_lipschitz} we connect the decay rate of windowed Fourier coefficients to a new bound for the variation of windowed functions, which is based on the combination of two main ingredients: the  Leibniz product rule and a bound for intermediate derivatives due to Ore.}

\subsection{Outline}
We start by recalling basic properties of the Fourier series for functions of bounded variation in \S\ref{sec:classical_fourier}.
Afterwards, in \S\ref{sec:windowed_transform} we present the windowed transform, see Proposition~\ref{fact:rep_windowed}, and estimate the reconstruction errors in Theorem~\ref{fact:error_rep_window}.
In \S\ref{sec:bump_windows} we introduce the $C^s$-bumps and transfer the results of chapter~\ref{sec:windowed_transform} to this class.
As a special candidate of $C^1$-bumps, we consider the Tukey window in \S\ref{sub:hann_window}.
Finally, we present numerical experiments in \S\ref{sec:numeric}, that underline our theoretical results and illustrate the benefits of bump windows.
\section{Functions of bounded variation and their Fourier series}\label{sec:classical_fourier}
\subsection{Functions of bounded variation}
We denote by $\BV_{\text{loc}}$ the set of functions $f\colon\R\to\R$, which are locally of bounded variation, that is of bounded variation on every finite interval.
In particular, we assume that such functions are normalized for any $x$ in the interior of the interval of definition, see \cite[\S0.6]{Butzer:1971aa}, by
\begin{align*}
	f(x)
	=\frac{1}{2}
	\Big(f(x^+)+f(x^-)\Big)
	=\frac{1}{2}
	\left(\lim_{t\to 0+}f(x+t)+\lim_{t\to 0+}f(x-t)\right).
\end{align*}
We recall that a function of bounded variation is bounded, has at most a \textcolor{black}{countable} set of jump discontinuities, and that the pointwise evaluation is well-defined.

\subsection{The classical Fourier representation}
Any $2\pi$-periodic function $f\in\BV_{\text{loc}}$ has a pointwise converging Fourier series, see \cite[Prop. 4.1.5.]{Butzer:1971aa}.
Let us transfer this representation to an arbitrary interval of length $2\lambda$:
\begin{lemma}(Fourier series of the periodization)\label{fact:rep_fourier}\\
	Suppose that $\psi\in \BV_{\emph{loc}}$ as well as $\lambda>0$ and $t\in\R$.
	Then,
	\begin{align*}
		\psi(x) 
		=\sum_{k\in\Z} c_\psi(k)e^{ik\frac{\pi}{\lambda}x},
		\quad x\in(t-\lambda,t+\lambda),
	\end{align*}
	where the coefficients $c_\psi(k)$ are given by
	\begin{align*}
		c_\psi(k)
		:=\frac{1}{2\lambda}\int_{t-\lambda}^{t+\lambda} \psi(x) e^{-ik\frac{\pi}{\lambda}x}\ \mathrm{d}x	,\quad k\in\Z.
	\end{align*}
\end{lemma}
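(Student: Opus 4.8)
The plan is to reduce the statement to the classical $2\pi$-periodic pointwise convergence result cited just above (\cite[Prop.~4.1.5.]{Butzer:1971aa}) by an affine change of variables that carries the interval $(t-\lambda,t+\lambda)$ onto $(-\pi,\pi)$. Concretely, I would set $y=\frac{\pi}{\lambda}(x-t)$ and define $g(y):=\psi\!\left(t+\frac{\lambda}{\pi}y\right)$ for $y\in(-\pi,\pi)$, then extend $g$ to a $2\pi$-periodic function on $\R$. Since composition with an affine bijection preserves bounded variation on bounded intervals, $g\in\BV_{\text{loc}}$; moreover the normalization of $\psi$ (the averaging of one-sided limits) is inherited by $g$ at every interior point, because the substitution is strictly monotone and continuous, so it maps one-sided limits to one-sided limits and $\tfrac12\big(g(y_0^+)+g(y_0^-)\big)=\tfrac12\big(\psi(x_0^+)+\psi(x_0^-)\big)=g(y_0)$.

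First I would apply the cited theorem to $g$, which yields the pointwise representation $g(y)=\sum_{k\in\Z}\widehat g(k)e^{iky}$ for every $y\in(-\pi,\pi)$, with $\widehat g(k)=\frac{1}{2\pi}\int_{-\pi}^{\pi} g(y)e^{-iky}\,\mathrm{d}y$. Substituting $y=\frac{\pi}{\lambda}(x-t)$ back into the series and using $e^{iky}=e^{-ik\frac{\pi}{\lambda}t}e^{ik\frac{\pi}{\lambda}x}$, I obtain
\begin{align*}
\psi(x)=\sum_{k\in\Z}\widehat g(k)\,e^{-ik\frac{\pi}{\lambda}t}\,e^{ik\frac{\pi}{\lambda}x},\qquad x\in(t-\lambda,t+\lambda).
\end{align*}
It then remains to identify the coefficients. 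Carrying out the same substitution inside the integral defining $\widehat g(k)$ (with $\mathrm{d}y=\frac{\pi}{\lambda}\,\mathrm{d}x$ and the endpoints $\pm\pi$ mapping to $t\pm\lambda$) gives $\widehat g(k)=\frac{1}{2\lambda}\int_{t-\lambda}^{t+\lambda}\psi(x)e^{-ik\frac{\pi}{\lambda}(x-t)}\,\mathrm{d}x$, and multiplying by $e^{-ik\frac{\pi}{\lambda}t}$ absorbs the shift phase to produce exactly $c_\psi(k)=\frac{1}{2\lambda}\int_{t-\lambda}^{t+\lambda}\psi(x)e^{-ik\frac{\pi}{\lambda}x}\,\mathrm{d}x$, as claimed.

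The only genuinely delicate point is the bookkeeping at the endpoints: the $2\pi$-periodic extension of $g$ may acquire jump discontinuities at $\pm\pi$ (equivalently, $\psi$ need not match up across $t\pm\lambda$), but since the assertion concerns the \emph{open} interval $(t-\lambda,t+\lambda)$, these boundary jumps lie outside the range of interest and do not affect the pointwise representation. I would therefore only verify that $g$ is normalized and of bounded variation on the interior — which is immediate — before invoking the classical theorem; the remaining steps are routine changes of variables. Because the substitution is affine with positive slope $\frac{\pi}{\lambda}$, it is orientation-preserving, so no Jacobian sign issues or reordering of the summation arise.
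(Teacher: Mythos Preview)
Your proof is correct and follows essentially the same approach as the paper: both reduce to the classical $2\pi$-periodic result via the affine change of variables $y=\frac{\pi}{\lambda}(x-t)$, compute the Fourier coefficients by substitution, and then transform back. The paper merely packages the substitution using its operators $\Per_\pi\Sca_{\lambda/\pi}\Tra_t$ and invokes Lemma~\ref{app:fact_periodization} for the $\BV_{\text{loc}}$ property of the periodization, whereas you spell out the same steps (and the normalization check) directly.
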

For the proof of Lemma~\ref{fact:rep_fourier} and our subsequent analysis, we will use a translation, a scaling and a periodization operator.
For the center $t\in\R$ and a scaling factor $a>0$, we introduce:
\begin{align*}
	&\Tra_t\colon\BV_{\text{loc}}\to\BV_{\text{loc}},\ 
	(\Tra_t \psi)(x):= \psi(x+t),\\*[1ex]
	&\Sca_a\colon\BV_{\text{loc}}\to\BV_{\text{loc}},\ 
	(\Sca_a \psi)(x):= \psi(ax).
\end{align*}
For the period half length $\lambda>0$, we set
\begin{align*}	
	&\Per_\lambda\colon\BV_{\text{loc}}\to\BV_{\text{loc}},\\
 	&(\Per_\lambda \psi)(x)
	:=
	\begin{cases}
		\psi(x),
		&\text{if}\ x\in(-\lambda,\lambda),\\[2mm]
		\displaystyle
		\frac{1}{2}\Big(\psi(-\lambda^+)+\psi(\lambda^-)\Big),
		&\text{if}\ x = \lambda.
	\end{cases}
\end{align*}
\begin{proof}
	Consider the $2\pi$-periodic function $f=\Per_\pi\Sca_{\lambda/\pi}\Tra_t\psi$.
	Then, it follows from Lemma~\ref{app:fact_periodization} that $f\in \BV_{\text{loc}}$ and therefore
	\begin{align*}
		f(x) 
		=\sum_{k=-\infty}^\infty \widehat f(k) e^{ikx},\quad x\in\R.
	\end{align*} 
	The Fourier coefficients of $f$ are given by
	\begin{align*}
		\widehat f(k) 
		&= \frac{1}{2\pi}\int_{-\pi}^\pi f(x)e^{-ikx}\ \mathrm{d}x
		=\frac{1}{2\pi} \int_{-\pi}^\pi \left(\Sca_{\lambda/\pi}\Tra_t \psi\right)\!(x) e^{-ikx}\ \mathrm{d}x\\
		&=\frac{1}{2\lambda} \int_{t-\lambda}^{t+\lambda} \psi(x) e^{-ik\frac{\pi}{\lambda}(x-t)}\ \mathrm{d}x.
	\end{align*}
	Consequently, for all $x\in (t-\lambda,t+\lambda)$ we obtain
	\begin{align*}
		\psi(x)
		=\left(\Tra_{-t}\Sca_{\pi/\lambda}f\right)\!(x)
		=\sum_{k\in\Z} c_\psi(k)e^{ik\frac{\pi}{\lambda}x}.
	\end{align*}
\end{proof}
%

\subsection{The classical result of Jackson}
In general, even if $\psi$ is a smooth function, the periodic extension $f=\Per_\pi\Sca_{\lambda/\pi}\Tra_t \psi\in \BV_{\text{loc}}$ has jump discontinuities at $\pm\pi$.
Let $V(f)<\infty$ denote the total variation of $f$.
Then, by \cite[chapter 2.3.6]{Edwards:1982aa},
\begin{align*}
	|k\cdot c_\psi(k)|
	=|k\cdot \widehat f(k)|
	\le \frac{1}{2\pi}V(f),\quad\text{for all $k\in\Z$}.
\end{align*}
Hence, the coefficients are $\mathcal{O}(1/|k|)$.
Moreover, the rate of the coefficients transfers to an estimate for the reconstruction errors.
For an arbitrary function $f\in \BV_{\text{loc}}$ of period $2\pi$ let us introduce the partial Fourier sum
\begin{align*}
	S_nf(x)
	:=\sum_{k=-n}^n \widehat f(k)e^{ikx},
	\quad n\ge 1,\ x\in\R.
\end{align*}
Our analysis relies on the following classical result by Jackson on the convergence of the Fourier sum, see \cite[chapter II.3, Theorem IV]{Jackson:1994aa}:
\begin{proposition}\label{fact:jackson}
	If $f\colon\R\to\R$ is a function of period $2\pi$, which has a $s$th derivative with limited variation, $s\ge 1$, and if $V$ is the total variation of $f^{(s)}$ over a period, then, for $n>0$,
	\begin{align}\label{eq:error_partial_sum}
		|f(x)-S_nf(x)|
		\le \frac{2V}{s\pi n^s},\quad x\in\R.
	\end{align}
\end{proposition}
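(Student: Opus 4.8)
The plan is to derive the stated error bound from the decay rate of the Fourier coefficients $\widehat{f}(k)$, which in turn follows from the bounded variation of $f^{(s)}$ together with the coefficient estimate already recalled above (the bound $|k\cdot\widehat{f}(k)|\le V(f)/(2\pi)$ from \cite[chapter 2.3.6]{Edwards:1982aa}). Since the hypothesis $s\ge 1$ forces the coefficients to be absolutely summable, the Fourier series converges uniformly to $f$, so the reconstruction error is exactly the tail $\sum_{|k|>n}\widehat{f}(k)e^{ikx}$, and the whole problem reduces to estimating this tail.

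First I would relate $\widehat{f}(k)$ to $\widehat{f^{(s)}}(k)$ by integrating by parts $s$ times. For each $j\in\{1,\dots,s\}$ the function $f^{(j-1)}$ is continuous and $2\pi$-periodic, so the boundary terms in
\[
	\widehat{f^{(j)}}(k)=\frac{1}{2\pi}\int_{-\pi}^{\pi}f^{(j)}(x)e^{-ikx}\,\mathrm{d}x=ik\,\widehat{f^{(j-1)}}(k)
\]
vanish; iterating yields $\widehat{f^{(s)}}(k)=(ik)^s\widehat{f}(k)$ for $k\neq 0$, where the final step, in which only $f^{(s)}\in\BV$ is available, is read as a Riemann--Stieltjes integration by parts. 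Applying the recalled coefficient bound to $g=f^{(s)}$ gives $|k|\,|\widehat{f^{(s)}}(k)|\le V/(2\pi)$, and substituting $\widehat{f^{(s)}}(k)=(ik)^s\widehat{f}(k)$ produces the key decay estimate
\[
	|\widehat{f}(k)|\le\frac{V}{2\pi\,|k|^{s+1}},\qquad k\neq 0.
\]

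It then remains to sum the tail. Using $|\widehat{f}(-k)|=|\widehat{f}(k)|$ (as $f$ is real) and the triangle inequality,
\[
	|f(x)-S_nf(x)|\le\sum_{|k|>n}|\widehat{f}(k)|\le\frac{V}{\pi}\sum_{k=n+1}^{\infty}\frac{1}{k^{s+1}},
\]
and a comparison with $\int_{n}^{\infty}x^{-(s+1)}\,\mathrm{d}x=1/(s\,n^s)$ bounds the right-hand side by $V/(s\pi n^s)$, uniformly in $x$. This already yields \eqref{eq:error_partial_sum} with room to spare: the factor $2$ in the claim leaves slack compared with the sharper constant $1$ obtained here.

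I expect the main obstacle to be a matter of rigor rather than of ideas: justifying the iterated integration by parts under the weak hypothesis $f^{(s)}\in\BV$ (so that only the last step is a genuine Stieltjes integral, while the earlier ones rely on absolute continuity of $f^{(j-1)}$), and confirming that the uniformly convergent series indeed reproduces $f$ pointwise at every $x$. Both become routine once the regularity cascade $f^{(s)}\in\BV\Rightarrow f^{(s-1)}\in C\Rightarrow\cdots\Rightarrow f\in C^{s-1}$ is spelled out.
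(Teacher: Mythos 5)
The paper does not actually prove Proposition~\ref{fact:jackson}; it imports it verbatim from \cite[chapter II.3, Theorem IV]{Jackson:1994aa}, so there is no internal proof to compare against. Your argument is a correct, self-contained derivation by the natural elementary route: $s$ integrations by parts plus the Stieltjes bound $|k\,\widehat{g}(k)|\le V(g)/(2\pi)$ applied to $g=f^{(s)}$ give $|\widehat f(k)|\le V/(2\pi|k|^{s+1})$, and the tail comparison $\sum_{k>n}k^{-(s+1)}\le\int_n^\infty x^{-(s+1)}\,\mathrm{d}x=1/(sn^s)$ then yields the uniform bound $V/(s\pi n^s)$ --- a factor of $2$ sharper than the constant in the statement, so the claim follows a fortiori. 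The two points you flag as needing rigor are exactly the right ones, and both are routine: (i) $f^{(s)}$ is bounded (being of bounded variation) and is the everywhere-derivative of $f^{(s-1)}$, so $f^{(s-1)}$ is Lipschitz, hence absolutely continuous, which legitimises the last genuine integration by parts and leaves only the Stieltjes step to supply the final power of $k$; and (ii) once $\sum_k|\widehat f(k)|<\infty$, the Fourier series converges uniformly to a continuous function with the same coefficients as the continuous function $f$, hence to $f$ itself, so the reconstruction error is exactly the tail. What the citation buys the paper is brevity; what your derivation buys is a marginally better constant and transparency about where each power of $k$ comes from, which is the same mechanism the paper later invokes for the windowed coefficients in \eqref{eq:order_windowed}.
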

%
\section{The windowed transform}\label{sec:windowed_transform}
There seems to be no general definition of a window function, but most authors tend to think of a real function $w\ne 0$, vanishing outside a given interval.
In relation to the STFT in \eqref{eq:stft}, additional properties, such as a smooth cut-off or complex values, may be required, see e.g. \cite[\S3]{Grochenig:2001aa} and \cite[\S2]{Kaiser:2011aa}.
Whenever speaking about windows in this paper, we assume the following:
\begin{definition}
	Let $\lambda>0$.
	We say that a function $w\in \BV_{\emph{loc}}$ is a \emph{window function} on the interval $(-\lambda,\lambda)$, if the following properties are satisfied:
	\begin{align}
		\begin{split}
			&(1)\ 0\le w(x)\le 1,\ \text{for}\ x\in (-\lambda,\lambda),\\
			&(2)\ w(x)=0,\ \text{for}\ x\in \R\backslash(-\lambda,\lambda).
		\end{split}
	\end{align}
\end{definition}
In particular, we obtain the rectangular window, if $w(x)=1$ for all $x\in(-\lambda,\lambda)$, and for simplicity we just write $w\equiv 1$ in this case.
For $\psi\in \BV_{\text{loc}}$ and a window $w$ on $(-\lambda,\lambda)$ we introduce the windowed periodization
\begin{align}\label{eq:fw}
	\psi_w:=\Per_\pi\Sca_{\lambda/\pi}\left[w\cdot \Tra_t \psi\right].
\end{align}
Note that $\psi_w$ is $2\pi$-periodic, and by Lemma~\ref{app:fact_periodization} we obtain $\psi_w\in \BV_{\text{loc}}$.

\subsection{The windowed representation}
According to the classical Fourier series of the periodization presented in Lemma~\ref{fact:rep_fourier}, the windowed series allows an alternative representation with potentially faster convergence.
\begin{proposition}(Windowed Fourier series)\label{fact:rep_windowed}\\
	Let $\psi\in \BV_{\emph{loc}}$ and $\lambda>0$ and $t\in\R$.
	If $w\in \BV_{\emph{loc}}$ is a window on $(-\lambda,\lambda)$, then,
	\begin{align*}
		\psi(x)w(x-t) 
		=\sum_{k\in\Z} c_\psi^w(k)e^{ik\frac{\pi}{\lambda}x},
		\quad x\in(t-\lambda, t+\lambda),
	\end{align*}
	where the coefficients $c_\psi^w(k)$ are given by
	\begin{align*}
		c_\psi^w(k)
		:=\frac{1}{2\lambda}\int_{t-\lambda}^{t+\lambda} \psi(x)w(x-t) e^{-ik\frac{\pi}{\lambda}x}\ \mathrm{d}x,\quad k\in\Z.
	\end{align*}
\end{proposition}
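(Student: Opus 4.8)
The plan is to reduce the statement to Lemma~\ref{fact:rep_fourier} by viewing the windowed product as a single function of bounded variation. I set $\phi:=\psi\cdot(\Tra_{-t}w)$, so that $\phi(x)=\psi(x)\,w(x-t)$ for all $x\in\R$, and I aim to show that $\phi\in\BV_{\text{loc}}$; once this is established, applying Lemma~\ref{fact:rep_fourier} to $\phi$ (with the same $\lambda$ and the same center $t$) immediately yields the claimed representation, because the coefficients produced by the lemma, namely $\frac{1}{2\lambda}\int_{t-\lambda}^{t+\lambda}\phi(x)e^{-ik\pi x/\lambda}\,\mathrm{d}x$, are by definition exactly $c_\psi^w(k)$.

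First I would verify the bounded-variation claim. The translation operator maps $\BV_{\text{loc}}$ into itself, so $\Tra_{-t}w\in\BV_{\text{loc}}$, and both $\psi$ and $\Tra_{-t}w$ are bounded on every finite interval, since a function of bounded variation is bounded. Using the elementary product estimate $V(fg)\le\|f\|_\infty V(g)+\|g\|_\infty V(f)$ on an arbitrary finite interval, the product $\phi$ is again of bounded variation there, hence $\phi\in\BV_{\text{loc}}$. Alternatively, one may argue as in the proof of Lemma~\ref{fact:rep_fourier} and observe that the $2\pi$-periodic extension $\psi_w=\Per_\pi\Sca_{\lambda/\pi}[w\cdot\Tra_t\psi]$ from \eqref{eq:fw} lies in $\BV_{\text{loc}}$ by Lemma~\ref{app:fact_periodization}; expanding $\psi_w$ into its $2\pi$-periodic Fourier series and undoing the scaling and translation via $\Tra_{-t}\Sca_{\pi/\lambda}$ reproduces the same identity on $(t-\lambda,t+\lambda)$.

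With $\phi\in\BV_{\text{loc}}$ in hand, Lemma~\ref{fact:rep_fourier} applies directly and the proof is complete after identifying the coefficients. The only point that requires a word of care — and the closest thing to an obstacle — is the normalization convention: the $\BV_{\text{loc}}$ value of $\phi$ at a jump point is the half-sum $\tfrac12(\phi(x^+)+\phi(x^-))$, which need not coincide with the pointwise product of the individually normalized factors. Since $\psi$ and $w$ together have at most countably many jumps, the two agree off a countable set, and at each jump point the Fourier series of $\phi$ converges precisely to the normalized value, so the stated pointwise identity holds throughout $(t-\lambda,t+\lambda)$ once $\psi(x)w(x-t)$ is read with this convention. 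I note that the defining window properties $0\le w\le 1$ and $w\equiv 0$ outside $(-\lambda,\lambda)$ are not needed here beyond guaranteeing $w\in\BV_{\text{loc}}$; they will matter only for the subsequent error estimates.
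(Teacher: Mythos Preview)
Your proposal is correct and follows essentially the same route as the paper, which dispatches the proposition in a single sentence by saying it ``follows as in Lemma~\ref{fact:rep_fourier}, for the Fourier series of the $2\pi$-periodic windowed shape $\psi_w\in \BV_{\text{loc}}$''; applying Lemma~\ref{fact:rep_fourier} to your $\phi$ is precisely this, since $\Per_\pi\Sca_{\lambda/\pi}\Tra_t\phi=\psi_w$. Your additional remarks on the product of $\BV_{\text{loc}}$ functions and on the normalization convention at jump points fill in details the paper leaves implicit.
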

The statement in the last Proposition follows as in Lemma~\ref{fact:rep_fourier}, for the Fourier series of the $2\pi$-periodic windowed shape $\psi_w\in \BV_{\text{loc}}$.\\

Suppose that $\psi_w\in C^s(\R),\ s\ge 1$, and that $\psi_w^{(s)}$ has bounded variation.
Then, as it follows from \cite[chapter II.3, Corollary I]{Jackson:1994aa},
\begin{align}\label{eq:order_windowed}
	|c_\psi^w(k)|
	=|\widehat \psi_w(k)|
	\le \frac{V(\psi_w^{(s)})}{\pi|k|^{s+1}},
	\quad k\ne 0,
\end{align}
and thus the decay rate of the windowed coefficients $c_\psi^w$ improves to $\mathcal{O}\left(1/|k|^{s+1}\right)$.

\subsection{An error estimate for the representations}
For $n\ge 1$ and $x\in\R$ let
\begin{align*}
	R_n^w\psi(x)
	:=\sum_{k=-n}^n c_\psi^w(k)e^{ik\frac{\pi}{\lambda}x}
	\quad\text{and}\quad
	R_n\psi(x)
	:=R_n^{w\equiv1}\psi(x)
	=\sum_{k=-n}^n c_\psi(k)e^{ik\frac{\pi}{\lambda}x}.
\end{align*}
Note that $R_n^w\psi=\Tra_{-t}\Sca_{\pi/\lambda}(S_n\psi_w)$.
\textcolor{black}{
We now transfer Jackson's classical result in Proposition~\ref{fact:jackson} to an estimate for the windowed reconstruction errors in terms of the Lipschitz constant of $\psi_w^{(s)}$.
In order not to overload the notation unnecessarily, for the main results in this paper we always assume that $\lambda=\pi$ and $t=0$, that is, both the function $\psi$ and $\psi_w$ are $2\pi$-periodic and centered at the origin.
However, all results could also be formulated for an arbitrary choice of $\lambda>0$ and $t\in\R$ by performing an appropriate scaling and translation.}
\begin{theorem}(\textcolor{black}{Reconstruction, windowed series, $\lambda=\pi$ and $t=0$})\label{fact:error_rep_window}\\
	\textcolor{black}{
	Suppose that $\psi_w\in C^{s+1}(\R),\ s\ge 1$ and let $L_s>0$ denote the Lipschitz constant of $\psi_w^{(s)}$ over $[-\pi,\pi]$.
	Moreover, let $0<\rho<\pi$.
	Then, for $n\ge 1$ the error of the reconstruction $R_n^w\psi$ in the interval $[-\rho,\rho]$ is given by
	\begin{align}\label{eq:error_k_inf}
		\left|\sup_{x\in[-\rho,\rho]}\left|\psi(x)-R_n^w\psi(x)\right|-K_\infty(\psi,w,\rho)\right|
		\le \frac{4L_s}{sn^s},
	\end{align}
	where the non-negative constant $K_\infty(\psi,w,\rho)\ge 0$ is given by 
	\begin{align*}
		K_\infty(\psi,w,\rho)
		=\sup_{x\in[-\rho,\rho]}\Big(|\psi(x)|\cdot\big(1-w(x)\big)\!\Big).
	\end{align*}
	}
\end{theorem}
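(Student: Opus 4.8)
The plan is to reduce everything to the $2\pi$-periodic function $\psi_w$ and to the classical Jackson estimate of Proposition~\ref{fact:jackson}. Since $\lambda=\pi$ and $t=0$, the stated identity $R_n^w\psi=\Tra_{-t}\Sca_{\pi/\lambda}(S_n\psi_w)$ collapses to $R_n^w\psi=S_n\psi_w$, and on the fundamental domain $(-\pi,\pi)$ we have $\psi_w(x)=w(x)\psi(x)$. The first step is therefore to split the reconstruction error on $[-\rho,\rho]\subset(-\pi,\pi)$ as
\begin{align*}
	\psi(x)-R_n^w\psi(x)
	=\psi(x)\big(1-w(x)\big)+\big(\psi_w(x)-S_n\psi_w(x)\big),
\end{align*}
thereby isolating the \emph{windowing defect} $\psi(x)(1-w(x))$, whose modulus will produce $K_\infty(\psi,w,\rho)$, from the genuine truncation error $E_n(x):=\psi_w(x)-S_n\psi_w(x)$.

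Next I would control $E_n$ uniformly in $x$. Because $\psi_w\in C^{s+1}(\R)$, its $s$th derivative has bounded (limited) variation, and Proposition~\ref{fact:jackson} applied to $f=\psi_w$ gives $|E_n(x)|\le \tfrac{2V(\psi_w^{(s)})}{s\pi n^s}$ for every $x\in\R$. To convert the total variation into the Lipschitz constant, I use that $\psi_w^{(s)}$ is Lipschitz with constant $L_s$ on $[-\pi,\pi]$, so $|\psi_w^{(s+1)}|\le L_s$ there and hence $V(\psi_w^{(s)})=\int_{-\pi}^\pi|\psi_w^{(s+1)}|\,\mathrm{d}x\le 2\pi L_s$. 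This yields the uniform bound $|E_n(x)|\le M:=\tfrac{4L_s}{sn^s}$, which is precisely the right-hand side of \eqref{eq:error_k_inf}.

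The final step is a sandwich argument based on the reverse triangle inequality. Writing $g(x):=|\psi(x)|\,(1-w(x))\ge 0$ and using $0\le w(x)\le 1$ together with $|E_n(x)|\le M$, the decomposition above gives, pointwise,
\begin{align*}
	g(x)-M\le \big|\psi(x)-R_n^w\psi(x)\big|\le g(x)+M,
	\quad x\in[-\rho,\rho].
\end{align*}
Taking the supremum over $[-\rho,\rho]$ commutes with the additive constant $M$ and turns $\sup_x g(x)$ into $K_\infty(\psi,w,\rho)$, so that $K_\infty-M\le \sup_{x\in[-\rho,\rho]}|\psi-R_n^w\psi|\le K_\infty+M$; rearranging gives exactly the claimed estimate, and $K_\infty\ge 0$ is immediate from $g\ge 0$.

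I expect the only real subtlety to be careful bookkeeping rather than depth. One must check that $[-\rho,\rho]$ lies strictly inside $(-\pi,\pi)$ (guaranteed by $0<\rho<\pi$) so that $\psi_w=w\cdot\psi$ holds there and no periodization jump at $\pm\pi$ interferes; that $\psi$, $w$, and hence $\psi_w$ and $S_n\psi_w$ are real-valued, so the reverse triangle inequality is applied to genuine real numbers and $|\psi-R_n^w\psi|$ is an honest absolute value; and that the passage $V(\psi_w^{(s)})\le 2\pi L_s$ is legitimate, which is where the hypothesis $\psi_w\in C^{s+1}$ (rather than merely $C^s$ with $\psi_w^{(s)}$ of bounded variation) is actually used. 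None of these steps requires ideas beyond Proposition~\ref{fact:jackson} and elementary supremum/triangle-inequality manipulations.
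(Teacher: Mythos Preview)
Your proposal is correct and follows essentially the same route as the paper: both decompose $\psi-R_n^w\psi=\psi(1-w)+(\psi_w-S_n\psi_w)$, bound the truncation term via Proposition~\ref{fact:jackson} together with $V(\psi_w^{(s)})\le 2\pi L_s$, and conclude by a triangle/reverse-triangle sandwich before taking the supremum. The paper records the pointwise sandwich in the slightly tighter form $\big||\psi(x)|(1-w(x))\pm A_n(x)\big|$, but after passing to the supremum this yields the same conclusion as your $g(x)\pm M$ bounds.
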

\begin{proof}
	\textcolor{black}{
	Let $V<\infty$ denote the total variation of $\psi_w^{(s)}$ over a period.
	In particular,
	\begin{align*}
		V
		=\int_{-\pi}^\pi |\psi_w^{(s+1)}(x)|\ \mathrm{d}x
		\le 2\pi L_s.
	\end{align*}
	Hence, for all $x\in\R$ the classical Jackson result in Proposition~\ref{fact:jackson} yields
	\begin{align*}
		A_n(x)
		:=|\psi_w(x)-R_n^w\psi_w(x)|
		=|\psi_w(x)-S_n\psi_w(x)|
		\le \frac{4L_s}{s n^s}.
	\end{align*}
  	Moreover, for all $x\in[-\rho,\rho]$ we have $0\le w(x)\le 1$ and thus, by the reverse triangle inequality, we obtain
	\begin{align}\label{eq:estimates_k_inf}
		\left|\psi(x)-R_n^w\psi(x)\right|
		\begin{Bmatrix}
			\le\\
			\ge
		\end{Bmatrix}
		\left||\psi(x)|\cdot\big(1-w(x)\big)
		\begin{Bmatrix}
			+\\
			-
		\end{Bmatrix}
		A_n(x)\right|,\quad x\in[-\rho,\rho].
	\end{align}
	Taking the supremum proves \eqref{eq:error_k_inf}.
	}
\end{proof}
Note that for $w\equiv 1$ we obtain the convergence of the plain reconstruction $R_n\psi$, where $\color{black}{K_\infty(\psi,w,\rho)}=0$.
Theorem~\ref{fact:error_rep_window} allows a calculation of the $L^2$-error:
\begin{corollary}\label{fact:error2_rep_window}
	\textcolor{black}{
	The $L^2$-error of the reconstruction is given by
	\begin{align}\label{eq:error_k2}
		\left|\left\|\psi-R_n^w\psi\right\|^2_{L^2([-\rho,\rho])}-K_2(\psi,w,\rho)\right|
		&\le \frac{16\rho L_s}{sn^s}K_\infty(\psi,w,\rho)+\frac{32\rho L_s^2}{s^2n^{2s}},
	\end{align}
	where the non-negative constant $K_2(\psi,w,\rho)\ge 0$ is given by 
	\begin{align*}
		K_2(\psi,w,\rho)
		=\int_{-\rho}^\rho|\psi(x)|^2\big(1-w(x)\big)^2\ \mathrm{d}x.
	\end{align*}
	In particular, $K_2(\psi,w,\rho)=0$, if and only if $K_\infty(\psi,w,\rho)=0$.
	}
\end{corollary}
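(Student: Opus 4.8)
The plan is to reduce the $L^2$ statement to the pointwise decomposition already hidden inside the proof of Theorem~\ref{fact:error_rep_window}. Since $\lambda=\pi$ and $t=0$, the windowed periodization satisfies $\psi_w(x)=\psi(x)w(x)$ for $x\in(-\pi,\pi)$, and the reconstruction operator collapses to $R_n^w\psi=S_n\psi_w$. On the subinterval $[-\rho,\rho]\subset(-\pi,\pi)$ I would therefore write
\[
\psi(x)-R_n^w\psi(x)=\underbrace{\psi(x)\bigl(1-w(x)\bigr)}_{=:\,g(x)}+\underbrace{\bigl(\psi_w(x)-S_n\psi_w(x)\bigr)}_{=:\,e_n(x)}.
\]
Because $\psi$ and $w$ are real and the Fourier partial sum of the real function $\psi_w$ is again real, both $g$ and $e_n$ are real-valued, so that $\left|\psi-R_n^w\psi\right|^2=(g+e_n)^2$ holds pointwise.

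Next I would expand this square and integrate over $[-\rho,\rho]$, obtaining three terms $\int g^2$, $2\int g\,e_n$ and $\int e_n^2$. The first equals $K_2(\psi,w,\rho)$, since $g^2=\psi^2(1-w)^2=|\psi|^2(1-w)^2$. Subtracting it and applying the triangle inequality leaves only the cross term and the quadratic remainder, for which the two elementary bounds needed are already available: $|e_n(x)|=A_n(x)\le 4L_s/(sn^s)$ for every $x$ (the Jackson estimate established in the proof of Theorem~\ref{fact:error_rep_window}), and $|g(x)|=|\psi(x)|\bigl(1-w(x)\bigr)\le K_\infty(\psi,w,\rho)$ for $x\in[-\rho,\rho]$. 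Using the interval length $2\rho$, the cross term is bounded by $2\cdot\tfrac{4L_s}{sn^s}\cdot K_\infty\cdot 2\rho=\tfrac{16\rho L_s}{sn^s}K_\infty$ and the remainder by $\bigl(\tfrac{4L_s}{sn^s}\bigr)^2\cdot 2\rho=\tfrac{32\rho L_s^2}{s^2n^{2s}}$, which is exactly \eqref{eq:error_k2}.

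It remains to verify the equivalence $K_2=0\iff K_\infty=0$. The implication $K_\infty=0\Rightarrow K_2=0$ is immediate: $K_\infty=0$ forces the nonnegative function $h:=|\psi|(1-w)$ to vanish identically on $[-\rho,\rho]$, so its square integrates to zero. For the converse I would argue from regularity rather than continuity of $\psi$: $K_2=0$ means $h=0$ almost everywhere, and since $h$ is nonnegative and of bounded variation it admits one-sided limits at every point, each of which must equal the almost-everywhere value $0$; together with the normalization convention for $\BV_{\mathrm{loc}}$ this forces $h\equiv 0$ and hence $K_\infty=0$. The hard part of the whole proof is precisely this last step, where the passage from ``zero almost everywhere'' to ``zero everywhere'' must rely on the $\BV$-structure, because $\psi$ itself need not be continuous; everything preceding it is the routine expansion of a square followed by substitution of the two pointwise bounds.
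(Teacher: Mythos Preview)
Your argument is correct and essentially the same as the paper's: both integrate the pointwise decomposition coming from Theorem~\ref{fact:error_rep_window} and insert the Jackson bound $A_n\le 4L_s/(sn^s)$ together with $|g|\le K_\infty$ and the interval length $2\rho$; the only cosmetic difference is that the paper squares the triangle-inequality envelope \eqref{eq:estimates_k_inf} before integrating, whereas you expand $(g+e_n)^2$ directly using that all functions are real, and the resulting constants coincide. The paper does not argue the equivalence $K_2=0\iff K_\infty=0$ at all, so your BV-based justification for that clause is additional detail rather than a different route.
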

\begin{proof}
	\textcolor{black}{For $p\in\{1,2\}$ we introduce $N_{p,n,\rho}:=\left\|\psi_w-R^w_n\psi\right\|_{L^p([-\rho,\rho])}$.
	Then, it follows from \eqref{eq:estimates_k_inf} that for all $x\in[-\rho,\rho]$:
	\begin{align*}
		\left|\psi(x)-R_n^w\psi(x)\right|^2
		\begin{Bmatrix}
			\le\\
			\ge
		\end{Bmatrix}
		\left||\psi(x)|\cdot\big(1-w(x)\big)
		\begin{Bmatrix}
			+\\
			-
		\end{Bmatrix}
		A_n(x)\right|^2,
	\end{align*}
	and therefore, integration yields
	\begin{align*}
		\left\|\psi-R_n^w\psi\right\|^2_{L^2([-\rho,\rho])}
		\begin{Bmatrix}
			\le\\
			\ge
		\end{Bmatrix}
		K_2(\psi,w,\rho)
		\begin{Bmatrix}
			+\\
			-
		\end{Bmatrix}
		2K_\infty(\psi,w,\rho)N_{1,n,\rho}+N_{2,n,\rho}^2.
	\end{align*}
	Consequently, \eqref{eq:error_k2} follows from
	\begin{align*}
		N_{1,n,\rho}
		\le \sqrt{2\rho}\cdot N_{2,n,\rho}
		\le 2\rho\cdot\left(\sup_{x\in[-\rho,\rho]}A_n(x)\right)
		\le \frac{8\rho L_s}{sn^s}.
	\end{align*}
	}	
\end{proof}
In addition to the assumptions in Theorem~\ref{fact:error_rep_window}, let us assume that $w(x)=1$ for all $x\in[-\rho,\rho]$.
Then, it follows that $\color{black}{K_\infty(\psi,w,\rho)}=0$ and therefore $\color{black}{K_2(\psi,w,\rho)}=0$.
Hence, the reconstruction errors converge to 0 as $n\to\infty$.
This motivates the investigation of bump windows.
\section{Bump windows}\label{sec:bump_windows}
\textcolor{black}{We now introduce $C^s$-bump windows by singling out two additional properties:}
On the one hand, bump windows fall off smoothly at the boundary of their support, on the other hand, to receive a faithful windowed shape of the original function, bump windows have to equal 1 in a closed subinterval of their support.
The plots in Figure~\ref{fig:windows} show the typical shape and the action of a bump.

\begin{figure}
	\includegraphics{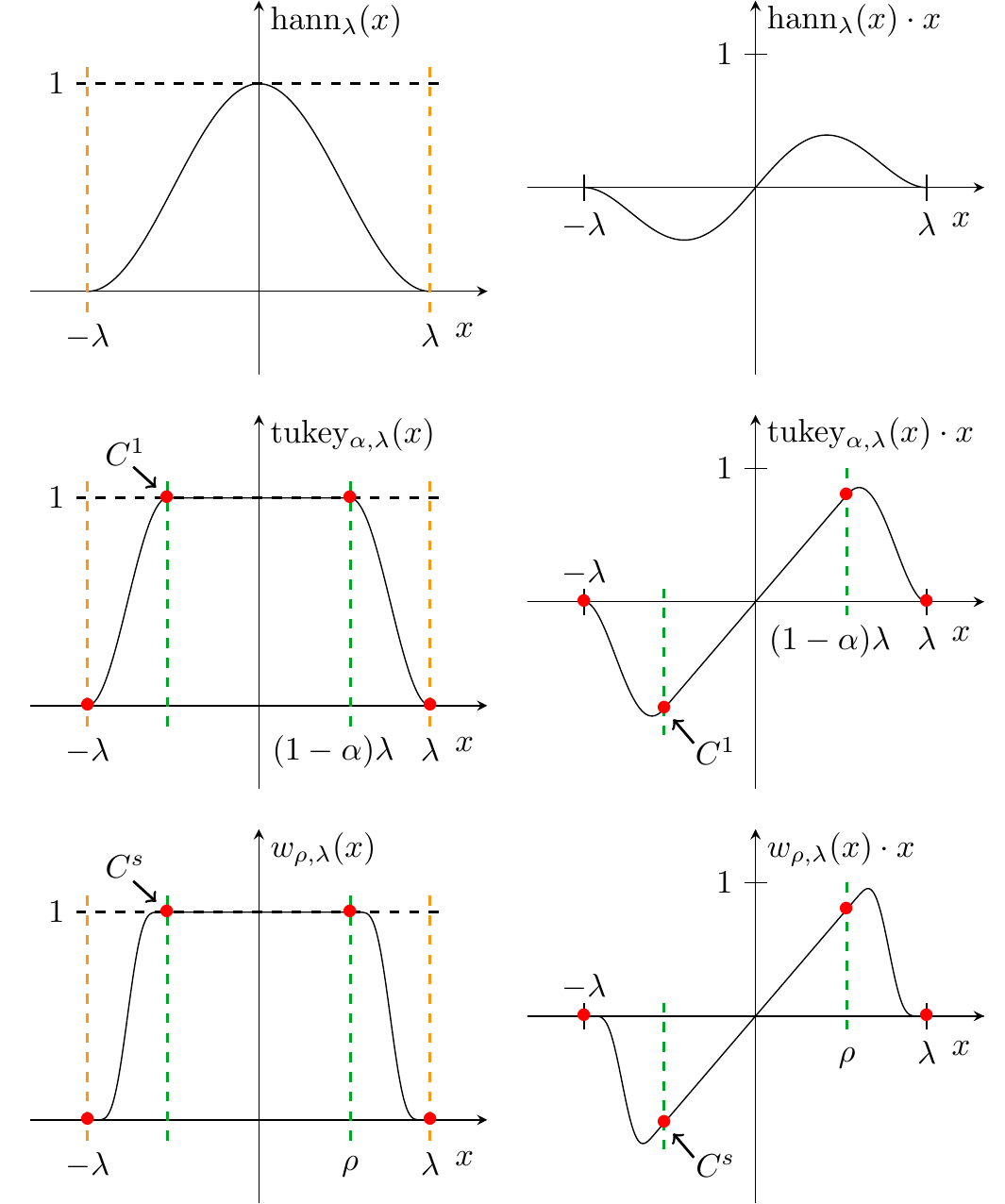}
	\caption{Different bump windows (left) and their action on the function $\psi(x)=x$ (right).
		The Hann window, see Definition~\ref{def:hann_window}, can be viewed as a \textcolor{black}{degenerate} $C^1$-bump.
		For $0<\alpha<1$, the Tukey window, see Definition~\ref{def:tukey_window}, is a \textcolor{black}{non-degenerate} $C^1$-bump.
		Generally, the $C^s$-bump $w_{\rho,\lambda}$ in Definition~\ref{def:bump} (bottom) is $s$-times, but not ($s+1$)-times continuously differentiable.}
	\label{fig:windows}	
\end{figure}

%
\begin{definition}\label{def:bump}
	Let $\lambda>0$ and $0\le\rho<\lambda$.
	For some $s\ge 1$ we say that the function $w_{\rho,\lambda}\in C_c^s(\R)$ is a $C^s$-\emph{bump}, if the following properties are satisfied:
	\begin{equation*}
		\begin{split}
			&(1)\ 0\le {\color{black}{w_{\rho,\lambda}(x)}}\le 1,\ \text{for}\ x\in (-\lambda,\lambda),\\
			&(2)\ {\color{black}{w_{\rho,\lambda}(x)}}=0,\ \text{for}\ x\in \R\backslash(-\lambda,\lambda),\\
			&(3)\ w_{\rho,\lambda}(x)=1,\ \text{for}\ x\in[-\rho,\rho].
		\end{split}
	\end{equation*}
	If $\rho=0$, we say that the bump is \emph{\textcolor{black}{degenerate}}.
	Moreover, whenever $w_{\rho,\lambda}\in C_c^\infty(\R)$, we say that $w_{\rho,\lambda}$ is a \emph{smooth bump}.
\end{definition}
\textcolor{black}{We note that smooth bump windows have previously been used for data analysis of gravitational waves, see \cite[Equation (3.35)]{Damour:2000aa} and \cite[\S2, Equation (7)]{McKechan:2010aa}.}
Moreover, bump functions occur when working with partitions of unity, e.g. in the theory of manifolds, see \cite[Lemma 2.22]{Lee:2013aa} and \cite[\S13.1]{Tu:2011aa}, and further, with a view to numerical applications, for so-called partition of unity methods, which are used for solving partial differential equations, see \cite[\S 4.1.2]{Griebel:2000aa}.
\textcolor{black}{An example for a smooth bump is given by the even function}
\begin{align}\label{eq:def_bump_smooth}
	w_{\rho,\lambda}(x)
	=
	\begin{cases}
		1,\ 
		& \text{if $0 \le|x|\le \rho$},\\
		\displaystyle\frac{1}{\exp\left(\frac{1}{\lambda-|x|}+\frac{1}{\rho-|x|}\right)+1},\ 
		& \text{if $\rho<|x|<\lambda$},\\
		0,\ 
		& \text{if $|x|\ge\lambda$}.
	\end{cases}
\end{align}
As we see in the right plots of Figure~\ref{fig:windows}, the product of a \textcolor{black}{non-degenerate} bump $w_{\rho,\lambda}$ and $\psi$ produces a (smooth) windowed shape, matching with $\psi$ in $[-\rho,\rho]$ and tending to 0 at the boundaries of $(-\lambda,\lambda)$.
In particular, we obtain excellent reconstructions using the smooth bump given by \eqref{eq:def_bump_smooth} in our numerical experiments.\\

\textcolor{black}{Although in this paper we only consider compactly supported windows, we note that also other types have been studied extensively in the past.
By abandoning the compact support, smooth windows can potentially be used for the pointwise reconstruction of exponential accuracy, wheres windows with compact support can at most obtain root exponential accuracy, see also \S\ref{sub:rep_bump_windows}.
An example for windows not having compact support is given by the class of exponential functions, which are of the form $\exp(-cx^{2m})$, where $c>0$ is a positive real constant and $m\ge 1$ is a positive integer.
We note that the choice of $c$ and $m$ control the decay of the window and from a numerical point of view, due to machine tolerance, it can be argued that a computer treats it as being compact, see \cite[\S2]{Tanner:2006aa}.
For other examples of non-compactly supported windows we refer to the work of Boyd in \cite{Boyd:1996aa} and subsequent papers, who pioneered the concept of adaptive filters.}

\subsection{The Hann- and the Tukey window}\label{sub:hann_window}
The class of bump windows includes the famous Hann window, which can be defined as follows, see \cite[\S4.2.2]{Mallat:2009aa}:
\begin{definition}\label{def:hann_window}
	Let $\lambda>0$.
	For all $x\in\R$ the \emph{Hann window} is given by
	\begin{align*}
		\operatorname{hann}_\lambda(x)
		=\cos^2\left(\frac{\pi}{2\lambda}x\right)\cdot\mathbf{1}_{[0,\lambda]}(|x|)
		=\frac{1}{2}\left[1+\cos\left(\frac{\pi}{\lambda}x\right)\right]\cdot\mathbf{1}_{[0,\lambda]}(|x|).
	\end{align*}
\end{definition}
In the sense of Definition~\ref{def:bump} the Hann window is a \textcolor{black}{degenerate} $C^1$-bump.
In particular, for $0<\rho'<\lambda$ it follows from Theorem~\ref{fact:error_rep_window} and Corollary~\ref{fact:error2_rep_window}, that the reconstruction errors for a function $\psi\ne 0$ \textcolor{black}{on} the interval $[t-\rho',t+\rho']$ are bounded from below by positive constants $\color{black}{K_\infty(\psi,w,\rho'),K_2(\psi,w,\rho')}>0$.
This fact can also be observed in our numerical experiments, see \S\ref{sub:saw_wave} and \S\ref{sub:parabola}.
\textcolor{black}{We note that the Hann window is a famous representative of windows specially used in signal processing.}\\

As it turns out, the Hann window arises as a special candidate of a more general class, the Tukey windows, \textcolor{black}{see \cite{Tukey:1967aa}}, often called cosine-tapered windows.
These windows can be imagined as a cosine lobe convolved with a rectangular window:
\begin{definition}\label{def:tukey_window}
	The \emph{Tukey window} with parameter $\alpha\in (0,1]$ is given by
	\begin{align*}
		\operatorname{tukey}_{\alpha,\lambda}(x)
		:=\mathbf{1}_{[0,(1-\alpha)\lambda)}(|x|)+\frac{1}{2}\left[1-\cos\left(\frac{\pi|x|}{\alpha\lambda}-\frac{\pi}{\alpha}\right)\right]\cdot\mathbf{1}_{[(1-\alpha)\lambda,\lambda]}(|x|).
	\end{align*}
\end{definition}
The Tukey window is a $C^1$-bump $w_{\rho,\lambda}$ with $\rho=(1-\alpha)\lambda$.
In particular,
\begin{align*}
	\operatorname{tukey}_{1,\lambda}
	=\operatorname{hann}_\lambda
	=w_{0,\lambda},
\end{align*}
and for $0<\alpha<1$ the Tukey window is not \textcolor{black}{degenerate}.
We note that the sum of phase-shifted Hann windows creates a Tukey window:
\begin{lemma}
	Let $\tau>0$ and $m\ge 0$.
	Then, for $\alpha=1/(m+1)$ and $\lambda=(m+1)\tau$,
	\begin{align*}
		\sum_{k=-m}^m\operatorname{hann}_\tau(\bullet-k\tau)
		=\operatorname{tukey}_{\alpha,\lambda}.
	\end{align*}
\end{lemma}
\begin{proof}
	For all $x\in\R$ we introduce the function
	\begin{align*}
		H_{\tau,m}(x)
		:=\sum_{k=-m}^m\operatorname{hann}_\tau(x-k\tau).
	\end{align*}
	Obviously, $H_{\tau,m}$ is an even function.
	Thus, for all $x\in\R$ we obtain
	\begin{align*}
		H_{\tau,m}(x)
		&=\sum_{k=0}^m\operatorname{hann}_\tau(|x|-k\tau)
		=\frac{1}{2}\sum_{k=0}^m\left[1+\cos\left(\frac{\pi}{\tau}(|x|-k\tau)\right)\right]\cdot\mathbf{1}_{[-\tau,\tau]}(|x|-k\tau)\\
		&=\frac{1}{2}\sum_{k=0}^m\left[1+\cos\left(\frac{\pi}{\tau}(|x|-k\tau)\right)\right]\cdot\Big(\mathbf{1}_{[(k-1)\tau,k\tau)}(|x|)+\mathbf{1}_{[k\tau,(k+1)\tau)}(|x|)\Big)\\
		&=\frac{1}{2}\sum_{k=0}^m\left[1+\cos\left(\frac{\pi}{\tau}(|x|-k\tau)\right)\right]\cdot\mathbf{1}_{[(k-1)\tau,k\tau)}(|x|)\\
		&\qquad+\frac{1}{2}\sum_{k=1}^{m+1}\left[1+\cos\left(\frac{\pi}{\tau}(|x|-(k-1)\tau)\right)\right]\cdot\mathbf{1}_{[(k-1)\tau,k\tau)}(|x|)\\
		&=\mathbf{1}_{[0,m\tau)}(|x|)+\frac{1}{2}\left[1-\cos\left(\frac{\pi}{\tau}(|x|-(m+1)\tau)\right)\right]\cdot\mathbf{1}_{[m\tau,(m+1)\tau)}(|x|)\\
		&\qquad+\frac{1}{2}\sum_{k=1}^m\left(\cos\left(\frac{\pi}{\tau}(|x|-k\tau)\right)+\cos\left(\frac{\pi}{\tau}(|x|-k\tau)+\pi\right)\right)\cdot\mathbf{1}_{[(k-1)\tau,k\tau)}(|x|)\\
		&=\mathbf{1}_{[0,m\tau)}(|x|)+\frac{1}{2}\left[1-\cos\left(\frac{\pi}{\tau}(|x|-(m+1)\tau)\right)\right]\cdot\mathbf{1}_{[m\tau,(m+1)\tau)}(|x|).
	\end{align*}
\end{proof}

\subsection{The representation for bump windows}\label{sub:rep_bump_windows}
The windowed Fourier series in Proposition~\ref{fact:rep_windowed} applies to bump functions and yields the following representation in the restricted interval $[t-\rho,t+\rho]$:
\begin{corollary} (Fourier series windowed by a bump function)\label{fact:rep_bump_window}\\
	Suppose that $\psi\in C^{s+1}(\R),\ s\ge 1$, as well as $\lambda>0$ and $0\le\rho<\lambda$ and $t\in\R$.
	If $w_{\rho,\lambda}\in C_c^{s+1}(\R)$ is a $C^{s+1}$-bump on $(-\lambda,\lambda)$, satisfying the three conditions in Definition~\ref{def:bump}, then,
	\begin{align*}
		\psi(x)
		=\sum_{k\in\Z} c_\psi^w(k)e^{ik\frac{\pi}{\lambda}x},
		\quad x\in[t-\rho, t+\rho],
	\end{align*}
	where the coefficients $c_\psi^w(k)$ are given by
	\begin{align*}
		c_\psi^w(k)
		=\frac{1}{2\lambda}\int_{t-\lambda}^{t+\lambda} \psi(x)w_{\rho,\lambda}(x-t) e^{-ik\frac{\pi}{\lambda}x}\ \mathrm{d}x,\quad k\in\Z.
	\end{align*}
	In particular, if $L_s>0$ denotes the Lipschitz constant of $\psi_w^{(s)}$ over $[-\pi,\pi]$, then,
	\begin{align}\label{eq:estimate_coef_bump}
		|c_\psi^w(k)|
		\le\frac{V(\psi_w^{(s)})}{\pi|k|^{s+1}}
		\le \frac{2L_s}{|k|^{s+1}},
		\quad k\ne 0.
	\end{align}
\end{corollary}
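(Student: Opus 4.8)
The plan is to handle the series representation and the coefficient estimate separately, since both reduce to facts already established earlier in the paper. For the representation I would first check that a $C^{s+1}$-bump is admissible for Proposition~\ref{fact:rep_windowed}. Properties (1) and (2) in Definition~\ref{def:bump} are exactly the two defining conditions of a window function, and because $w_{\rho,\lambda}$ is continuous with compact support it lies in $\BV_{\text{loc}}$; since $\psi\in C^{s+1}(\R)\subset\BV_{\text{loc}}$ as well, Proposition~\ref{fact:rep_windowed} applies and gives, for every $x\in(t-\lambda,t+\lambda)$,
\[
	\psi(x)\,w_{\rho,\lambda}(x-t)=\sum_{k\in\Z}c_\psi^w(k)\,e^{ik\frac{\pi}{\lambda}x}.
\]
The decisive point is the plateau condition (3): it forces $w_{\rho,\lambda}(x-t)=1$ exactly when $x-t\in[-\rho,\rho]$, that is for $x\in[t-\rho,t+\rho]$, and the hypothesis $0\le\rho<\lambda$ guarantees $[t-\rho,t+\rho]\subset(t-\lambda,t+\lambda)$. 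On this closed subinterval the windowed product therefore coincides with $\psi(x)$, and the claimed representation follows at once, without any approximation argument.

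For the estimate I would chain the two stated inequalities. The first, $|c_\psi^w(k)|\le V(\psi_w^{(s)})/(\pi|k|^{s+1})$, is precisely \eqref{eq:order_windowed}; to apply it I must confirm that $\psi_w\in C^s(\R)$ with $\psi_w^{(s)}$ of bounded variation. This is the one place that calls for care: because $w_{\rho,\lambda}\in C_c^{s+1}$ is supported in $[-\lambda,\lambda]$, continuity of its derivatives forces $w_{\rho,\lambda}^{(j)}(\pm\lambda)=0$ for $j=0,\dots,s+1$, so by the Leibniz rule the product $w_{\rho,\lambda}\cdot\Tra_t\psi$ together with all its derivatives up to order $s+1$ vanishes at $\pm\lambda$; after scaling by $\Sca_{\lambda/\pi}$ and periodizing by $\Per_\pi$, the matching of left- and right-hand derivatives at the gluing points is automatic and yields $\psi_w\in C^{s+1}(\R)$. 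Consequently $\psi_w^{(s)}$ is $C^1$ and periodic, hence Lipschitz and of bounded variation with $V(\psi_w^{(s)})=\int_{-\pi}^\pi|\psi_w^{(s+1)}|$. The second inequality then amounts to $V(\psi_w^{(s)})\le 2\pi L_s$, which I would obtain exactly as in the proof of Theorem~\ref{fact:error_rep_window}, using that the Lipschitz constant $L_s$ bounds $|\psi_w^{(s+1)}|$ pointwise on $[-\pi,\pi]$.

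I expect the only genuine obstacle to be this regularity bookkeeping, namely verifying that the periodization $\psi_w$ retains its $C^{s+1}$ smoothness across $\pm\pi$; once that is in place, everything else is a direct citation of Proposition~\ref{fact:rep_windowed}, equation \eqref{eq:order_windowed}, and the variation bound already used for Theorem~\ref{fact:error_rep_window}. The compact support with a full tower of vanishing boundary derivatives is exactly what makes the glued function smooth, so no delicate estimate is needed there either---the whole corollary is essentially an assembly of the machinery built in the preceding sections.
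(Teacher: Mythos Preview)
Your proposal is correct and matches the paper's intended argument. The paper gives no explicit proof of this corollary---it is presented as an immediate assembly of Proposition~\ref{fact:rep_windowed}, the plateau condition (3) in Definition~\ref{def:bump}, the coefficient bound \eqref{eq:order_windowed}, and the variation estimate $V(\psi_w^{(s)})\le 2\pi L_s$ from the proof of Theorem~\ref{fact:error_rep_window}---and you have spelled out precisely these ingredients, including the regularity check that the vanishing boundary derivatives of $w_{\rho,\lambda}$ make the periodization $\psi_w$ genuinely $C^{s+1}$.
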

We note that for $w=\operatorname{hann}_\lambda$ the representation in Corollary~\ref{fact:rep_bump_window} shrinks to a pointwise representation at $x=t$.
Furthermore, the bound in \eqref{eq:estimate_coef_bump} depends on the choice of the bump $w_{\rho,\lambda}$, and for $\rho\approx\lambda$ the windowed transform does not lead to an improvement of the decay for low frequencies $k$, because in this case the action of the bump is comparable to a truncation of $\psi$, such that the Lipschitz constant $L_s$ dominates.
We will illustrate this fact with numerical experiments in \S\ref{sub:parabola}.

\textcolor{black}{
Moreover, we note that for a smooth bump $w_{\rho,\lambda}\in C_c^\infty(\R)$ the coefficients $c_\psi^w(k)$ do not decay exponentially fast, since the window is compactly supported and thus not analytic, see \cite{Tadmor:1986aa}.
Nevertheless, the coefficients of a smooth bump have an exponential rate of fractional order and the actual rate can be classified by analyzing their so-called Gevrey regularity, see \cite[Equation (2.4)]{Tadmor:2007aa}.
}

\subsection{A bound for the Lipschitz constant}
We now investigate the Lipschitz constant $L_s$ in Corollary~\ref{fact:rep_bump_window}.
Using the work of Ore in \cite{Ore:1938aa}, we crucially use an estimate on the higher order derivatives of the product of two functions, which is developed in \S\ref{subsec:product}.\\

For a function $f\colon\R\to\R$, that is $(s+1)$-times differentiable, $s\ge 1$, with a $(s+1)$th derivative bounded on a finite interval $(a,b)$, let us introduce the non-negative constant
\begin{align}\label{eq:Cf}
	C_{s,f} 
	=\sup_{x\in(a,b)}|f(x)|+\frac{(b-a)^{s+1}}{(s+1)!}\sup_{x\in(a,b)}|f^{(s+1)}(x)|
	\ge 0.
\end{align}
\begin{theorem}(\textcolor{black}{Bound for the Lipschitz constant, $\lambda=\pi$ and $t=0$})\label{fact:estimate_lipschitz}\\
	\textcolor{black}{
	Let $0\le\rho<\pi$ and suppose that $\psi\in C^{s+1}(\R)$ and $w_{\rho,\pi}\in C_c^{s+1}(\R)$ for some $s\ge 1$.
	Assume the existence of two non-negative constants $M_\psi, M_{\psi_{s+1}}\ge 0$, such that
	\begin{align*}
		|\psi(x)|\le M_\psi
		\quad\text{and}\quad 
		|\psi^{(s+1)}(x)|\le M_{\psi_{s+1}}
		\quad\text{for all $x\in (-\pi,\pi)$.}
	\end{align*}
	Then, the Lipschitz constant $L_s$ in Corollary~\ref{fact:rep_bump_window} is bounded by 
	\begin{align*}
		L_s
		\le M_{\psi_{s+1}}+M_\psi\|w^{(s+1)}_{\rho,\pi}\|_\infty+\frac{C_{s,\psi} C_{s,w}}{(2\pi)^{s+1}}\cdot K_s,
	\end{align*}
	where the non-negative constants $C_{s,\psi},C_{s,w}\ge 0$ are given by
	\begin{align*}
		C_{s,\psi}
		=M_\psi+\frac{(2\pi)^{s+1}}{(s+1)!}M_{\psi_{s+1}}
		\quad\text{and}\quad
		C_{s,w}
		=1+\frac{(2\pi)^{s+1}}{(s+1)!}\|w^{(s+1)}_{\rho,\pi}\|_\infty,
	\end{align*}
	and the constant $K_s>0$ is given by
	\begin{align}\label{eq:formula_ks}
		K_s
		=\frac{2^{2s+1}\cdot s^2\cdot (3s)!}{ (2s+1)!}.
	\end{align}
	}
\end{theorem}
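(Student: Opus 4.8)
The plan is to reduce the claim to a pointwise bound on the top derivative of the windowed shape, expand that derivative by the Leibniz rule, dispatch the two extreme terms of the expansion by hand, and treat the intermediate terms by Ore's inequality.

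First I would pin down what $L_s$ actually measures. Since $\lambda=\pi$ and $t=0$, the operators $\Sca_{\lambda/\pi}$ and $\Tra_t$ are the identity, so $\psi_w=\Per_\pi(\psi\cdot w_{\rho,\pi})$ and on the fundamental interval $\psi_w=\psi\,w_{\rho,\pi}$. Because $w_{\rho,\pi}\in C_c^{s+1}(\R)$ is supported in $[-\pi,\pi]$, all its derivatives up to order $s+1$ vanish at $\pm\pi$; hence $\psi\,w_{\rho,\pi}$ vanishes to order $s+1$ at the endpoints and its $2\pi$-periodization lies in $C^{s+1}(\R)$. As $\psi_w^{(s)}$ is then continuously differentiable, its Lipschitz constant over $[-\pi,\pi]$ equals $\sup_{[-\pi,\pi]}|\psi_w^{(s+1)}|=\sup_{[-\pi,\pi]}|(\psi\,w_{\rho,\pi})^{(s+1)}|$, so it suffices to bound this supremum.

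Next I would apply the Leibniz rule,
\[
(\psi\,w_{\rho,\pi})^{(s+1)}=\sum_{j=0}^{s+1}\binom{s+1}{j}\psi^{(j)}\,w_{\rho,\pi}^{(s+1-j)},
\]
and split off the two extreme indices. The term $j=s+1$ is $\psi^{(s+1)}w_{\rho,\pi}$, bounded by $M_{\psi_{s+1}}$ since $0\le w_{\rho,\pi}\le 1$; the term $j=0$ is $\psi\,w_{\rho,\pi}^{(s+1)}$, bounded by $M_\psi\|w_{\rho,\pi}^{(s+1)}\|_\infty$. These reproduce the first two summands of the asserted bound. Using $\sup|\psi|\le M_\psi$, $\sup|\psi^{(s+1)}|\le M_{\psi_{s+1}}$ and $\sup|w_{\rho,\pi}|=1$ identifies $C_{s,\psi}$ and $C_{s,w}$ with the instances of \eqref{eq:Cf} on $(a,b)=(-\pi,\pi)$, where $b-a=2\pi$. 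For the remaining indices $1\le j\le s$ both factors are intermediate derivatives, and here I would invoke the Ore-type estimate built from \cite{Ore:1938aa}: on an interval of length $2\pi$ it bounds $\|\psi^{(j)}\|_\infty$ by $\tfrac{b_j}{(2\pi)^j}C_{s,\psi}$ and $\|w_{\rho,\pi}^{(s+1-j)}\|_\infty$ by $\tfrac{b_{s+1-j}}{(2\pi)^{s+1-j}}C_{s,w}$ with explicit Ore constants $b_k$. Multiplying and summing, the powers of $2\pi$ collapse to $(2\pi)^{s+1}$ and $C_{s,\psi}C_{s,w}$ factors out, leaving
\[
\frac{C_{s,\psi}C_{s,w}}{(2\pi)^{s+1}}\sum_{j=1}^{s}\binom{s+1}{j}b_j\,b_{s+1-j}.
\]

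The main obstacle is then to show that this weighted convolution of Ore constants equals $K_s=\tfrac{2^{2s+1}s^2(3s)!}{(2s+1)!}$. The boundary terms and the Leibniz split are routine; the work lies in carrying the precise $b_k$ through the sum and evaluating the convolution in closed form. I expect this to reduce to a Chu--Vandermonde or Gauss hypergeometric summation — the appearance of $(3s)!$ and $(2s+1)!$ strongly suggests a single $_2F_1$ at a rational argument — so I would reindex, absorb the weight $\binom{s+1}{j}$ into the factorial form of $b_j\,b_{s+1-j}$, and match against a known convolution identity to produce $K_s$. A secondary point requiring care is checking that replacing the sharp suprema by $M_\psi$, $M_{\psi_{s+1}}$ and by $1=\sup|w_{\rho,\pi}|$ only enlarges the bound, so that every inequality points in the intended direction.
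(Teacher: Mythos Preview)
Your proposal is correct and follows essentially the same route as the paper: identify $L_s$ with $\sup_{(-\pi,\pi)}|(\psi\,w_{\rho,\pi})^{(s+1)}|$, expand by Leibniz, handle the two endpoint terms directly, and bound the intermediate terms by Ore's inequality with constants $K(i,s)$ (your $b_i$), leaving the convolution $K_s=\sum_{k=1}^{s}\binom{s+1}{k}K(s+1-k,s)K(k,s)$. The closed-form evaluation of $K_s$ is indeed the only nontrivial step; the paper carries it out by rewriting $K(i,s)$ in factorial form and recognizing the resulting sum as a Vandermonde-type convolution from Gould's identity, which is exactly the kind of Chu--Vandermonde manipulation you anticipate.
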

\begin{proof}
	\textcolor{black}{
	According to Proposition~\ref{fact:prod} in the next section, we use the bound for the $(s+1)$th derivative of the product $fg$ for $f=w_{\rho,\pi}$ and $g=\psi$.
	This results in
	\begin{align*}
		L_s
		=\sup_{x\in(-\pi,\pi)}\left|\frac{\mathrm{d}^{s+1}}{\mathrm{d}x^{s+1}}\Big(w_{\rho,\pi}(x)\psi(x)\Big)\right|
		\le M_{\psi_{s+1}}+M_\psi\|w^{(s+1)}_{\rho,\pi}\|_\infty+\frac{C_{s,\psi}C_{s,w}}{(2\pi)^{s+1}}\cdot K_s.
	\end{align*}
	Moreover, for the formula of the constant $K_s$ in \eqref{eq:formula_ks} we refer to Lemma~\ref{fact:combinatorial_constant}.
	}
\end{proof}
\begin{remark}
	Stirling's formula yields the following approximation of $K_s$: 
	\begin{align*}
		K_s 
		=\frac{2s}{2s+1}\frac{2^{2s}\cdot s\cdot(3s)!}{(2s)!}
		\sim \frac{4^s\cdot s\cdot\sqrt{6\pi s}\cdot(3s)^{3s}e^{-3s}}{\sqrt{4\pi s}\cdot (2s)^{2s}e^{-2s}}
		=s\cdot\sqrt{\frac{3}{2}}\left(\frac{27s}{e}\right)^s.
	\end{align*}
	The sign $\sim$ means that the ratio of the quantities tends to 1 as $s\to\infty$.
\end{remark}
\textcolor{black}{
In \cite[Lemma 3.2]{Gottlieb:1985aa}, Gottlieb and Tadmor present a bound for the largest maximum norm of a windowed Dirichlet kernel (regularization kernel) and its first $s$ derivatives.
This bound is used to derive an error estimate for the reconstruction of a function by a discretization of the convolution integral with an appropriate trapezoidal sum, cf. \cite[Proposition 4.1]{Gottlieb:1985aa}.}
\textcolor{black}{
Instead of working with the largest maximum norm of the first $s$ derivatives, we are now presenting a new bound for the $(s+1)$th derivative of a product of two functions.
We therefore combine the Leibniz product rule with individual bounds for intermediate derivatives, and to the best of our knowledge, this is the first time that an explicit bound has been revealed this way.
}

\subsection{Estimating higher order derivatives of a product}\label{subsec:product}
If $f$ is $(s+1)$-times differentiable, and if its $(s+1)$th derivative is bounded on a finite interval $(a,b)$, then, it follows from \cite[Theorem 2]{Ore:1938aa} that all intermediate derivatives are bounded.
In particular, for all $i=1,...,s$ and all $x\in(a,b)$,
\begin{align}\label{eq:fi}
	|f^{(i)}(x)|
	\le K(i,s)\cdot\frac{C_{s,f}}{(b-a)^i},
\end{align}
where the combinatorial constant $K(i,s)>0$ is defined according to
\begin{align}\label{eq:def_k}
	K(i,s)
	=\frac{2^i\cdot s^2(s^2-1^2)\cdots(s^2-(i-1)^2)}{1\cdot3\cdot5\cdots(2i-1)},\quad i\in \{1,...,s\}.
\end{align}
We now use the general Leibniz rule to lift this result to an explicit bound for the $(s+1)$th derivative of the product of two functions.
\begin{proposition}\label{fact:prod}
	Let $s\ge 1$ and $f,g\colon \R \to\R$, both $(s+1)$-times differentiable in a finite interval $(a,b)$. 
	Assume the existence of four non-negative constants
	\begin{align*}
		M_f, M_g, M_{f_{s+1}}, M_{g_{s+1}}
		\ge 0,
	\end{align*}
	such that for all $x\in(a,b)$:
	\begin{align*}
		|f(x)|
		\le M_f,\
		|g(x)|
		\le M_g
		\quad and\quad
		|f^{(s+1)}(x)|
		\le M_{f_{s+1}},\
		|g^{(s+1)}(x)|
		\le M_{g_{s+1}}.
	\end{align*}
	Then, for all $x\in(a,b)$ we have
	\begin{align*}
		|(fg)^{(s+1)}(x)|
		\le M_f M_{g_{s+1}}+M_{f_{s+1}}M_g+\frac{C_{s,f}C_{s,g}}{(b-a)^{s+1}}\cdot K_s,
	\end{align*}
	where the constants $C_{s,f},C_{s,g}\ge 0$ are defined according to \eqref{eq:Cf} and the constant $K_s>0$, which only depends on $s$, is given by
	\begin{align}\label{eq:Kn}
		K_s
		=\sum_{k=1}^s\binom{s+1}{k} K(s+1-k,s)\cdot K(k,s).
	\end{align} 
\end{proposition}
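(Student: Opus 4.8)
The plan is to expand the derivative of the product with the general Leibniz rule,
$$(fg)^{(s+1)}(x) = \sum_{k=0}^{s+1}\binom{s+1}{k} f^{(s+1-k)}(x)\, g^{(k)}(x),$$
and then to bound the summands according to whether they involve an outer derivative (of order $0$ or $s+1$) or only intermediate derivatives. The two boundary terms $k=0$ and $k=s+1$ will be controlled directly by the four hypotheses, whereas the remaining terms $1\le k\le s$ will be handled by Ore's estimate \eqref{eq:fi}. Adding the three contributions through the triangle inequality then produces the claimed bound.

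For the boundary terms, the summand at $k=0$ equals $f^{(s+1)}(x)\,g(x)$, which is bounded by $M_{f_{s+1}}M_g$, and the summand at $k=s+1$ equals $f(x)\,g^{(s+1)}(x)$, bounded by $M_f M_{g_{s+1}}$. Together these two terms yield the first summand $M_f M_{g_{s+1}}+M_{f_{s+1}}M_g$ of the stated estimate. For the interior terms I would first observe that the index restriction $1\le k\le s$ forces the complementary order $s+1-k$ to lie in $\{1,\dots,s\}$ as well, so that Ore's bound \eqref{eq:fi} applies simultaneously to \emph{both} factors, giving
$$\bigl|f^{(s+1-k)}(x)\bigr| \le K(s+1-k,s)\,\frac{C_{s,f}}{(b-a)^{s+1-k}}, \qquad \bigl|g^{(k)}(x)\bigr| \le K(k,s)\,\frac{C_{s,g}}{(b-a)^{k}}.$$
Multiplying these two estimates, the powers of $(b-a)$ combine to $(b-a)^{-(s+1)}$ independently of $k$, and summing over $k=1,\dots,s$ against the binomial weights $\binom{s+1}{k}$ factors out $C_{s,f}C_{s,g}/(b-a)^{s+1}$ and leaves precisely the combinatorial sum $K_s$ of \eqref{eq:Kn}.

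The argument is essentially bookkeeping, so I do not expect a genuine obstacle at this stage; the only points requiring care are confirming that both derivative orders in each interior term stay within the admissible range $\{1,\dots,s\}$ of Ore's estimate, and noting that the $k$-independence of the combined power of $(b-a)$ is exactly what lets the binomial sum detach as the single constant $K_s$. The genuinely delicate work, namely the evaluation of this sum to the closed form \eqref{eq:formula_ks}, is a purely combinatorial matter that is separated out into Lemma~\ref{fact:combinatorial_constant} and is not needed for the present proposition.
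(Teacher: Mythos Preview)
Your proposal is correct and follows essentially the same approach as the paper's own proof: expand via the Leibniz rule, peel off the two boundary terms $k=0$ and $k=s+1$ using the assumed bounds, and control the intermediate terms $1\le k\le s$ by applying Ore's estimate \eqref{eq:fi} to both factors so that the powers of $(b-a)$ combine to $(b-a)^{-(s+1)}$ and the remaining sum is exactly $K_s$. There is nothing to add or correct.
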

\begin{proof}
	By the general Leibniz rule the $(s+1)$th derivative of $fg$ is given by
	\begin{align*}
		(fg)^{(s+1)}
		=\sum_{k=0}^{s+1}\binom{s+1}{k}f^{(s+1-k)}\cdot g^{(k)}.
	\end{align*}
	We therefore obtain the following estimate for all $x\in(a,b)$:
	\begin{align*}
		|(fg)^{(s+1)}(x)|
		&\le \sum_{k=0}^{s+1}\binom{s+1}{k}|f^{(s+1-k)}(x)|\cdot |g^{(k)}(x)|\\
		&\le M_f M_{g_{s+1}}+M_{f_{s+1}}M_g+\sum_{k=1}^{s}\binom{s+1}{k}|f^{(s+1-k)}(x)|\cdot |g^{(k)}(x)|.
	\end{align*}
	Using \eqref{eq:fi} for $1\le k\le s$, we conclude that
	\begin{align*}
		|f^{(s+1-k)}(x)|
		&\le K(s+1-k,s)\cdot\frac{C_{s,f}}{(b-a)^{s+1-k}},\\
		|g^{(k)}(x)|
		&\le K(k,s)\cdot\frac{C_{s,g}}{(b-a)^k},
	\end{align*}
	and thus 
	\begin{align*}
		|(fg)^{(s+1)}(x)|
		\le M_f M_{g_{s+1}}+M_{f_{s+1}}M_g+\frac{C_{s,f}C_{s,g}}{(b-a)^{s+1}}\cdot K_s.
	\end{align*}
\end{proof}
\begin{remark}
	\textcolor{black}{The bound
	\begin{align*}
		|f^{(i)}(x)|
		\le K(i,s)\cdot\frac{M_f}{(b-a)^i},\quad x\in(a,b),
	\end{align*}
	for a polynomial $f$ of degree $s$ is due to W. Markoff (1916) and it is known that the equality sign is attained for the Chebyshev polynomials, see \cite{Markoff:1916ab}.}
\end{remark}
%

\subsection{The combinatorial constant}\label{subsec:constant}
Next, we will investigate the combinatorial constant $K_s$ and derive formula \eqref{eq:formula_ks} presented in Theorem~\ref{fact:estimate_lipschitz}.
\begin{lemma}\label{fact:combinatorial_constant}
	Let $s\ge 1$.
	The combinatorial constant $K_s>0$ in \eqref{eq:Kn} satisfies
	\begin{align}\label{eq:rep_ks}
		K_s
		=\frac{2^{2s+1}\cdot s^2\cdot (3s)!}{ (2s+1)!}.
	\end{align}
\end{lemma}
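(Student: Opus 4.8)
The goal is to prove the closed-form identity
\begin{align*}
	K_s
	=\sum_{k=1}^s\binom{s+1}{k} K(s+1-k,s)\cdot K(k,s)
	=\frac{2^{2s+1}\cdot s^2\cdot (3s)!}{(2s+1)!}.
\end{align*}
The plan is to first substitute the explicit definition \eqref{eq:def_k} of $K(i,s)$ into each factor and simplify the products of the form $s^2(s^2-1^2)\cdots(s^2-(i-1)^2)$. The key observation is that $s^2-j^2=(s-j)(s+j)$, so the product $\prod_{j=0}^{i-1}(s^2-j^2)$ telescopes into a ratio of factorials: it equals $\frac{(s+i-1)!}{(s-i)!}\cdot\frac{1}{\text{(something)}}$, and the denominator $1\cdot 3\cdot 5\cdots(2i-1)=\frac{(2i)!}{2^i\, i!}$ can likewise be written in factorial form. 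After rewriting both $K(s+1-k,s)$ and $K(k,s)$ this way, the powers of $2$ combine to $2^{s+1}$, and the binomial coefficient $\binom{s+1}{k}=\frac{(s+1)!}{k!(s+1-k)!}$ gets folded in.

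Next I would collect everything under the summation sign. After the factorial bookkeeping, each summand should reduce to a ratio of factorials that is recognizable as a term in a hypergeometric or Vandermonde-type sum. The natural strategy is to manipulate the summand until it matches a known binomial identity — most plausibly a form of the Vandermonde--Chu convolution or a special evaluation of a $_3F_2$ or $_2F_1$ series at a fixed argument. Concretely, I expect the sum over $k$ to collapse to a single factorial ratio after applying such an identity, and then a final simplification would produce the target expression $\frac{2^{2s+1}s^2(3s)!}{(2s+1)!}$. An alternative, cleaner route would be to verify that the proposed closed form and the sum satisfy the same first-order recurrence in $s$ (together with a base case at $s=1$), which sidesteps identifying the exact hypergeometric identity; one could also invoke Gosper/Zeilberger-style creative telescoping to certify the identity mechanically.

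The main obstacle will be the middle step: correctly converting the products $s^2(s^2-1^2)\cdots(s^2-(i-1)^2)$ into clean factorial expressions without sign or index errors, since the shifted factors $(s-j)(s+j)$ mix ascending and descending factorials, and the two instances $i=k$ and $i=s+1-k$ must be combined with matched index ranges before the sum telescopes. Care is needed because the range of validity of \eqref{eq:def_k} is $i\in\{1,\dots,s\}$, and the summation indices $k$ and $s+1-k$ both stay in this range precisely for $1\le k\le s$, which is why the $k=0$ and $k=s+1$ terms of the Leibniz expansion were separated out earlier. I would double-check the symmetry $k\leftrightarrow s+1-k$ of the summand, since exploiting it may halve the algebra and make the underlying binomial identity more transparent. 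Once the factorials are in hand, matching to the Vandermonde identity (or verifying the recurrence) should be routine.
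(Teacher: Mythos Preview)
Your plan is essentially the paper's own proof. The paper rewrites $K(i,s)$ exactly as you describe---using $s^2-j^2=(s-j)(s+j)$ for the numerator and $1\cdot3\cdots(2i-1)=(2i)!/(2^i i!)$ for the denominator---to obtain $K(i,s)=\tfrac{s}{s+i}\,2^{2i}\,i!\,\binom{s+i}{2i}$, then substitutes and recognizes the resulting sum as the Vandermonde-type convolution $\sum_{k=0}^{s-1}\kappa_k(2s)\,\kappa_{s-1-k}(2s)=\kappa_{s-1}(4s)$ with $\kappa_j(n)=\binom{n-j}{j}\tfrac{n}{n-j}$, citing Gould's identity; the closed form then drops out. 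One small slip in your outline: the powers of $2$ combine to $2^{2(s+1)}$, not $2^{s+1}$, since each $K(i,s)$ contributes $2^{2i}$ after the denominator rewrite.
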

\begin{proof}
	We start by rewriting the constant $K(i,s)$ that has been defined in \eqref{eq:def_k}.
	Let $i\in\{1,...,s\}$.
	For the numerator we obtain
	\begin{align*}
		2^i \cdot s^2\cdot(s^2-1^2)\cdots(s^2-(i-1)^2)
		=2^i \cdot \frac{s}{s+i}\cdot \frac{(s+i)!}{(s-i)!}.
	\end{align*}
	For the denominator we have
	\begin{align*}
		1\cdot3\cdot5\cdots(2i-1)
		=\frac{(2i-1)!}{2^{i-1}\cdot (i-1)!}
		=\frac{(2i)!}{2^i\cdot i!}.
	\end{align*}
	Hence, we can rewrite $K(i,s)$ as
	\begin{align*}
		K(i,s)
		=2^i \cdot \frac{s}{s+i}\cdot \frac{(s+i)!}{(s-i)!}\frac{2^i\cdot i!}{(2i)!}
		=\frac{s}{s+i}\cdot 2^{2i}\cdot i!\cdot\binom{s+i}{2i},
	\end{align*}
	and the summands that define the number $K_s$ in \eqref{eq:Kn} can be expressed as
	\begin{align*}
		&\binom{s+1}{k} K(s+1-k,s)\cdot K(k,s)\\
		=\ &2^{2s}\cdot(s+1)!\cdot (2s)^2\cdot\frac{(s+k-1)!\cdot(2s-k)!}{(2k)!\cdot(s-k)!\cdot(2s-2k+2)!\cdot(k-1)!}.
	\end{align*}
	Therefore we conclude that
	\begin{align}\label{eq:rep_ks_sum}
		K_s 
		&=2^{2s}\cdot(s+1)!\cdot\sum_{k=0}^{s-1} \frac{(s+k)!\cdot(2s-k-1)!\cdot(2s)^2}{(2k+2)!\cdot(s-k-1)!\cdot(2s-2k)!\cdot k!}\\
		&=2^{2s}\cdot(s+1)!\cdot\sum_{k=0}^{s-1}\left[\binom{2s-k}{k} \frac{2s}{2s-k}\cdot\binom{s+k+1}{s-k-1} \frac{2s}{s+k+1}\right].\notag
	\end{align}
	Finally, let us introduce
	\begin{align*}
		\kappa_j(2s) 
		=\binom{2s-j}{j} \frac{2s}{2s-j}\quad \text{for}\quad j=0,1,...,s-1.
	\end{align*}
	Recognizing our constant $K_s$ as a Vandermonde-type convolution and using the representation in \cite[Equation (4)]{Gould:1956aa} we write
	\begin{align*}
		K_s 
		&= 2^{2s}\cdot(s+1)!\cdot\sum_{k=0}^{s-1} \kappa_{k}(2s)\cdot \kappa_{s-k-1}(2s)
		=2^{2s}\cdot(s+1)!\cdot \kappa_{s-1}(4s)\\
		&= 2^{2s}\cdot(s+1)!\cdot \binom{3s+1}{s-1}\frac{4s}{3s+1}
		= \frac{2^{2s+1}\cdot s^2\cdot (3s)!}{ (2s+1)!}.
	\end{align*}
\end{proof}
In Appendix~\ref{sec:upper_bound_appendix} we derive an upper bound for $K_s$ based on binomial coefficients.
\section{Numerical results}\label{sec:numeric}
According to our results in Theorem~\ref{fact:error_rep_window} and Corollary~\ref{fact:rep_bump_window} we present numerical experiments for three different functions.
We investigate reconstructions with the smooth bump $w_{\rho,\lambda}$ given by \eqref{eq:def_bump_smooth}, compared to those with the Hann window in Definition~\ref{def:hann_window} and the Tukey window in Definition~\ref{def:tukey_window}.
Besides the reconstructions we also present the decay of the coefficients and the reconstruction errors.\\
In \S\ref{sub:saw_wave} we start with the saw wave function to demonstrate the superiority of the windowed transform with a smooth bump for a function having a high jump discontinuity.
Afterwards, the experiments in \S\ref{sub:parabola} deal with a parabola function.
The symmetric periodic extension has no discontinuities, and therefore the parabola is a good candidate to illustrate the limitations of bump windows.
Last, in \S\ref{sub:hermite} we work with a rapidly decreasing function.
\textcolor{black}{As we will see in this example, for low frequencies all coefficients (plain, tukey, bump) have a rapid \textcolor{black}{initial} decrease, implying excellent reconstructions.}
\begin{remark}
	In the following experiments, the dependency of the windows on the parameters $\lambda, \rho$ and $\alpha$ are always assumed implicitly and therefore we write
	\begin{align*}
		\operatorname{hann}
		=\operatorname{hann}_\lambda,\quad
		\operatorname{tukey}
		=\operatorname{tukey}_{\alpha,\lambda},\quad
		\operatorname{bump}
		=w_{\rho,\lambda}.
	\end{align*}
\end{remark}
For the numerical computation of the (windowed) coefficients we used the fast Fourier transform (FFT), see Appendix~\ref{sec:fft_appendix}.

\subsection{Saw wave function}\label{sub:saw_wave}
\begin{figure}
	\includegraphics{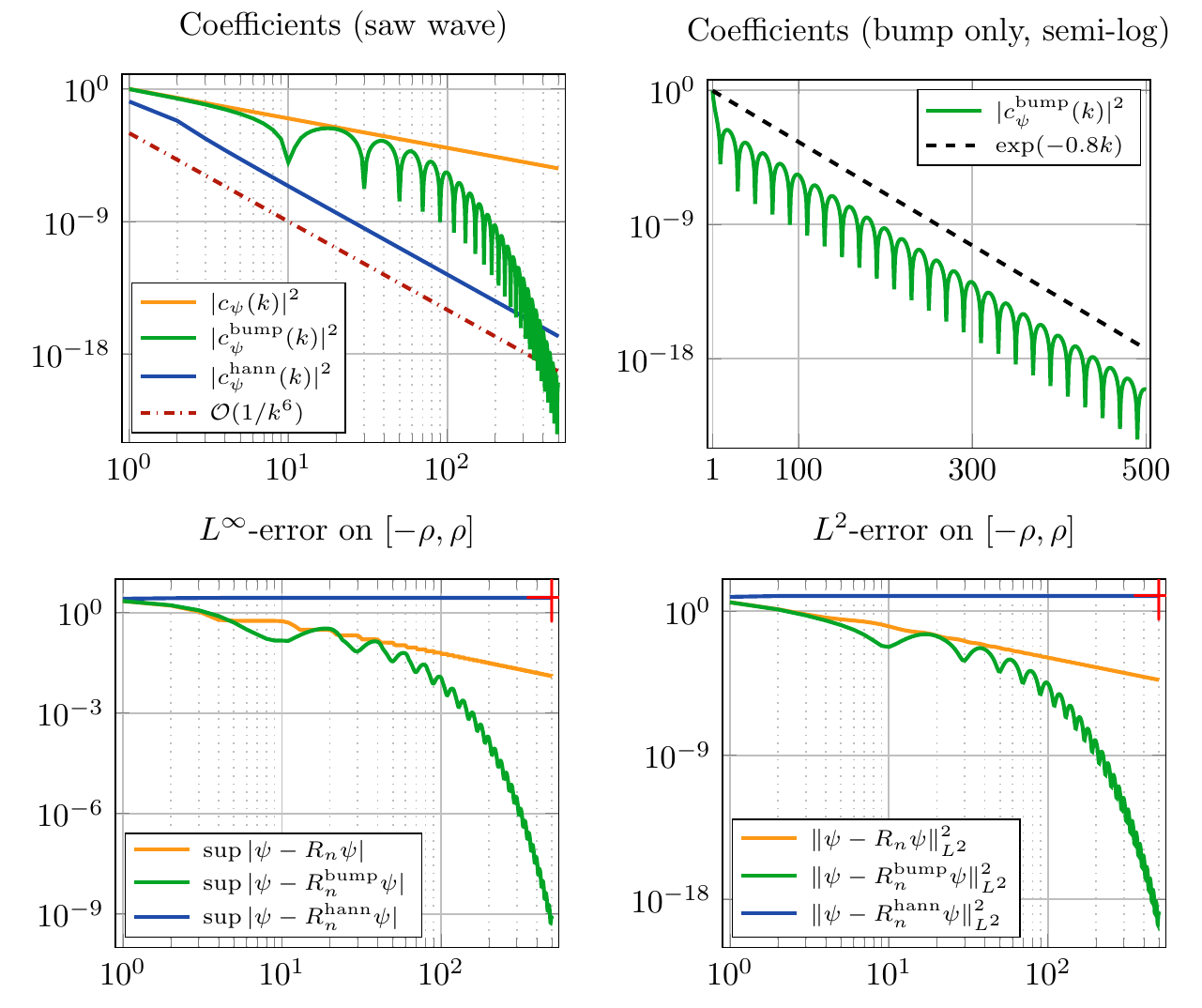}
	\caption{Decay of the coefficients (above) and reconstruction errors (below) for the saw wave. 
		The plain coefficients (orange) have order $\mathcal{O}(1/|k|)$, while the coefficients for the bump (green) show exponential decay (upper right side).
		For the Hann window the errors converge to constant values larger than 1 (red crosses).}
	\label{fig:saw}
\end{figure}
\begin{figure}
	\includegraphics{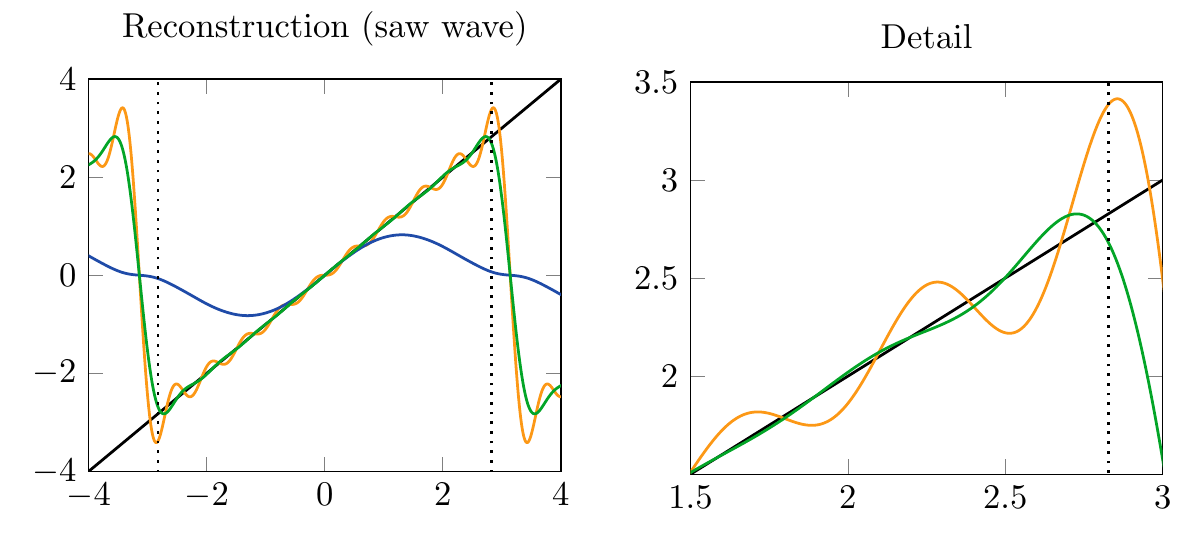}
	\caption{Plot of the reconstructions $R_{10}\psi$ and $R_{10}^w\psi$ for the saw wave function.
		For $x\in[-\rho,\rho]$ (dotted lines) the bump-windowed reconstruction (green) matches well with the original function and the typical overshoots \textcolor{black}{(Gibbs phenomenon)} of the Fourier sum (orange) are dampened.
		The reconstruction with the Hann window (blue) is accurate only in a small neighborhood of $0$.}
	\label{fig:saw_rec}
\end{figure}

In the first example we consider the function
\begin{align*}
	\psi(x)=x,\quad \Big[\lambda=\pi,\ \rho = 0.9\pi,\ t=0\Big].
\end{align*}
The corresponding periodic extension $\Per_\lambda \psi$ results in a saw wave function.\\
We note that $c_\psi(k)$ and $c_\psi^{\operatorname{hann}}(k)$ can be evaluated analytically and are given by
\begin{align*}
	c_\psi(k)
	=i\cdot\frac{(-1)^k}{k},\ k\in\Z\backslash\{0\}, \quad
	c_\psi^{\operatorname{hann}}(k)
	=-i\cdot\frac{(-1)^k}{2k(k^2-1)},\ k\in \Z\backslash\{-1,0,1\}
\end{align*}
\textcolor{black}{and $c_\psi(0)=c_\psi^{\operatorname{hann}}(0)=0,c_\psi^{\operatorname{hann}}(-1)=-c_\psi^{\operatorname{hann}}(1)=3i/8$.}
Moreover, since $\psi$ is a real function, we conclude that
\begin{align*}
	c^w_\psi(-k)=\overline{c^w_\psi(k)},\quad k\in\Z.
\end{align*}
The upper left hand side of Figure~\ref{fig:saw} shows $|c_\psi(k)|^2=1/k^2$, as well as $|c_\psi^w(k)|^2$ for both windows (hann \textcolor{black}{and} bump).
We observe that the windowed coefficients have a faster asymptotic decay than the plain Fourier coefficients.
The coefficients and the reconstruction errors for the bump (green) show the best asymptotic decay.
\textcolor{black}{As we observe in the upper right plot of Figure~\ref{fig:saw}, the bump-windowed coefficients show exponential initial decay.}
In particular, we recognize a trembling for these coefficients, while the other (plain and hann) have a smooth decay.
We provide an explanation of this phenomenon in Appendix~\ref{sec:oscillations_appendix}.
The reconstructions $R_{10}\psi$ and $R_{10}^w\psi$ are visualized in Figure~\ref{fig:saw_rec}.
\textcolor{black}{For the bump we recognize a good convergence to the original function $\psi$ in $[-\rho,\rho]$ (dotted lines), and the typical overshoots of the Fourier sum at the discontinuity (Gibbs phenomenon, see e.g. \cite[\S3]{Tadmor:2007aa}) are dampened.}
As expected, the reconstruction with the Hann window is accurate only in a small neighborhood of the center $t=0$, and according to Theorem~\ref{fact:error_rep_window} and Corollary~\ref {fact:error2_rep_window} the reconstruction errors converge to $\color{black}{K_\infty(\psi,w,\rho),K_2(\psi,w,\rho)}>0$.
For the saw wave these constants can be calculated analytically in terms of $\lambda$ and $\rho$, and their values are given by $K_\infty
	\approx 8.91$ and $K_2\approx 2.76$.
We have marked these values with red crosses.
In fact we observe a perfect match.
\begin{remark}
	\textcolor{black}{As we have discussed in \S\ref{sub:rep_bump_windows}, the coefficients of the bump do not fall exponentially fast for all $k$, since the bump is not analytic.
	However, in \cite{Boyd:2006aa} the author presents a smooth bump that is based on the erf-function, such that the Fourier coefficients for the saw wave fall exponentially fast (the exponential is of the square root of $k$).
	This is achieved by an optimization of the corresponding window parameters.
	In view of the bump used here, this relates to an optimal choice of $\rho$.
	}
\end{remark} 
%

\subsection{Parabola}\label{sub:parabola}
We consider the symmetric function 
\begin{align*}
	\psi(x)=x^2,\quad \Big[\lambda=1,\ \rho_1=0.25,\ \rho_2=0.8,\ t=0\Big].
\end{align*}
Note that
\begin{align*}
	c_\psi(k)
	=\frac{2\cdot(-1)^k}{k^2\pi^2},\ k\in\Z\backslash\{0\}, \quad
	c_\psi^{\operatorname{hann}}(k)
	=\frac{(-1)^k(1-3k^2)}{k^2(k^2-1)^2\pi^2},\ k\in \Z\backslash\{-1,0,1\},
\end{align*}
\textcolor{black}{
as well as
\begin{align*}
	c_\psi(0)
	=\frac{1}{3},\quad
	c_\psi^{\operatorname{hann}}(0)
	=\frac{1}{6}-\frac{1}{\pi^2},\quad
	c_\psi^{\operatorname{hann}}(-1)
	=c_\psi^{\operatorname{hann}}(1)
	=\frac{1}{12}-\frac{7}{8\pi^2}.
\end{align*}
}

\begin{figure}
	\includegraphics{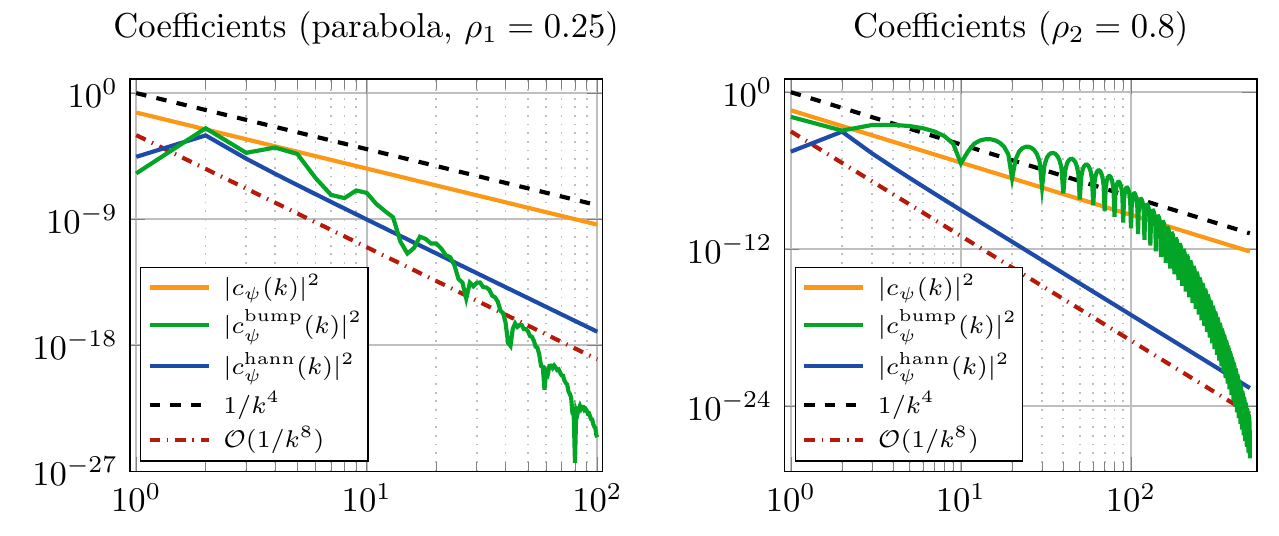}
	\caption{Decay of the representation coefficients for the parabola with $\rho_1=0.25$ (left) and $\rho_2=0.8$ (right).
	Again, the coefficients for the bump show a fast asymptotic decay.}
	\label{fig:parabola1}
\end{figure}
\begin{figure}
	\includegraphics{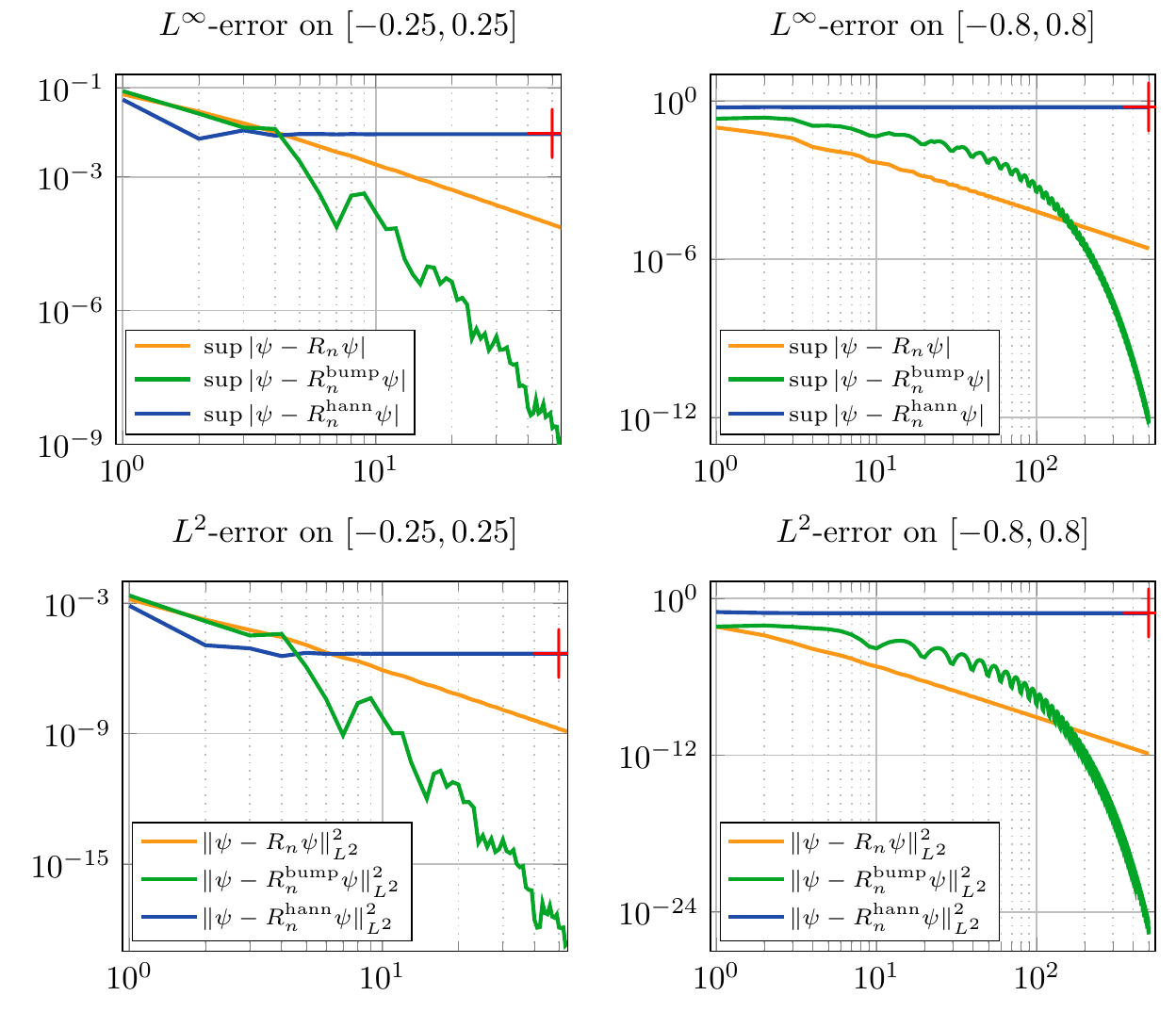}
	\caption{Reconstruction errors for the parabola with $\rho_1=0.25$ (left) and $\rho_2=0.8$ (right).
		For the second choice the smooth bump has a high derivative in the interval $(0.8,1)$, implying a large Lipschitz constant $L_s$.
		Consequently, for low frequencies the errors are worse than for the plain coefficients.}
	\label{fig:parabola2}
\end{figure}
\begin{figure}
	\includegraphics{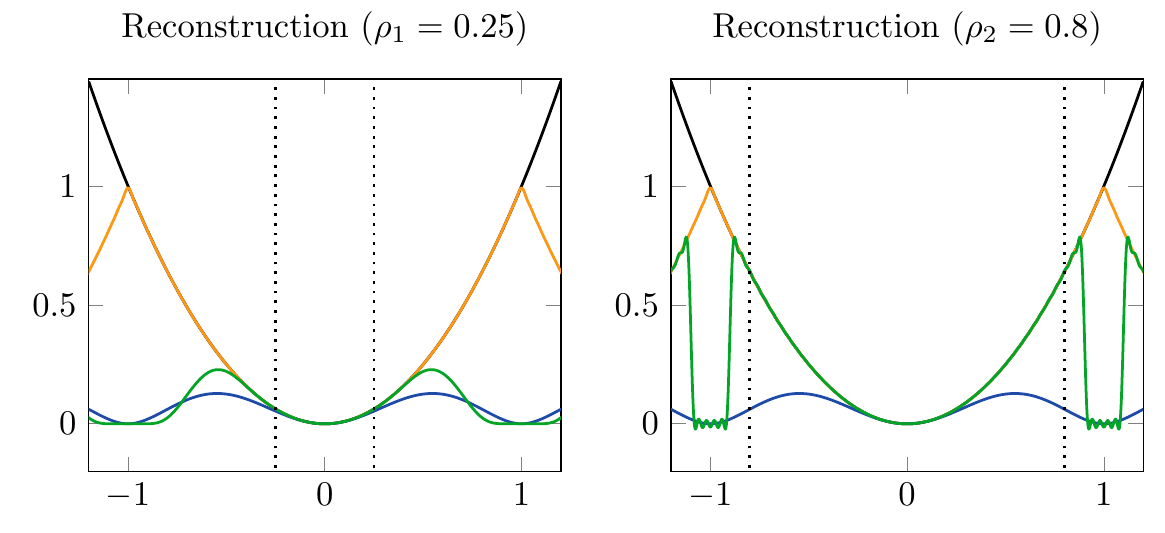}
	\caption{Reconstructions of the parabola.
		For $\rho_2=0.8$ (right) the bump-windowed shape (green) has a high derivative in $(0.8,1)$, implying a slow decay of the windowed coefficients.
		For $\rho_1=0.25$ (left) the coefficients fall off much faster, but the reconstruction is faithful only in a small interval, comparable to the Hann window.}
	\label{fig:parabola_rec}
\end{figure}

The plots in Figure~\ref{fig:parabola1} show the decay of the coefficients.
Especially for low frequencies, the coefficients for the Hann window show the fastest decay.
Nevertheless, we observe once more that the bump coefficients and errors have the best asymptotics, see Figure~\ref{fig:parabola2}.
As with the saw wave, the constants $\color{black}{K_\infty(\psi,w,\rho)}$ and $\color{black}{K_2(\psi,w,\rho)}$ can be calculated analytically and are given by
\begin{align*}
	K_\infty
	\approx
	\begin{cases}
		9.1\cdot 10^{-3},&\text{if $\rho=0.25$},\\
		0.58,& \text{if $\rho=0.8$},
	\end{cases}
	\quad\text{and}\quad
	K_2
	\approx
	\begin{cases}
		4.7\cdot10^{-6},&\text{if $\rho=0.25$},\\
		0.075,& \text{if $\rho=0.8$}.
	\end{cases}
\end{align*}
We have marked these values with red crosses and verify the predicted convergence of the errors.
The reconstructions $R_{50}(\psi)$ and $R^w_{50}(\psi)$ are visualized in Figure~\ref{fig:parabola_rec}.
For the first choice $\rho_1=0.25$ (left) the bump-windowed series approximates the original function only in the small interval $[-\rho_1,\rho_1]=[-0.25,0.25]$.
We note that the periodic extension of the parabola has no discontinuities and therefore the plain reconstruction gives a good approximation, even with few coefficients.\\

For a bad choice of the parameter $\rho$, the reconstruction with the bump gets worse.
According to Theorem~\ref{fact:estimate_lipschitz}, the Lipschitz constant $L_s$ is getting large as $\rho \to \lambda$, implying a slow decay for low frequencies, which can particularly be observed for the choice $\rho_2=0.8$. This value leads to a high derivative of the smooth bump $w_{0.8,1}$ in the interval $(0.8,1)$.
For low frequencies, the coefficients and the errors for the bump show a slow decay (right plots in Figure~\ref{fig:parabola1},\ref{fig:parabola2}) and are even worse than for the plain Fourier series.

\subsection{A function of rapid decrease}\label{sub:hermite}
\begin{figure}
	\includegraphics{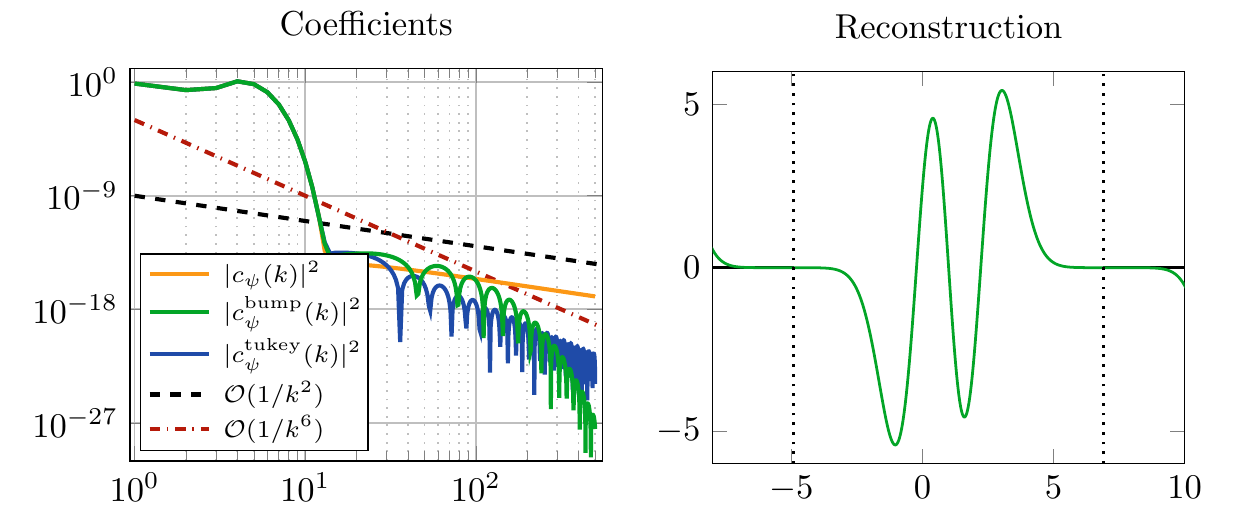}
	\caption{Coefficients (left) and reconstructions $R_{10}\psi$ and $R_{10}^w\psi$ (right) for the rescaled Hermite function.
		All coefficients show a rapid decrease for low frequencies and thus we obtain excellent reconstructions for all series.}
	\label{fig:rectangular}
\end{figure}

We also applied the transforms to
\begin{align*}
	\psi(x)
	=\left(8x^3-24x^2+12x+4\right)e^{-(x-1)^2/2},\quad \Big[\lambda=2\pi,\ \rho = 5.9,\ t=1\Big].
\end{align*}
We note that $\psi(x+1)$ is the product of the Hermite polynomial $H_3(x)=8x^3-12x$ times a Gaussian, i.e. a rescaled Hermite function.
For the center we chose $t=1$.
In contrast to the previous examples, we now work with the Tukey window for $\alpha=1-\rho/\lambda$, see Definition~\ref{def:tukey_window}.
We recall that this window is a \textcolor{black}{non-degenerate} $C^1$-bump.
The $2\lambda$-periodic extension of $\psi$ produces discontinuities with very small jumps, which can only be resolved with high frequencies.
Consequently, for low frequencies all coefficients are almost the same and fall off rapidly, see Figure~\ref{fig:rectangular}.
Nevertheless, the plain coefficients are $\mathcal{O}(1/|k|)$, while the coefficients for the smooth bump again show the best asymptotic decay.
For the reconstructions we used $R_{10}\psi$ and $R_{10}^w\psi$.
As we observe in the right plot of Figure~\ref{fig:rectangular}, the rapid decrease of the coefficients yields excellent reconstructions and no differences can be determined to the original function.
\appendix
\section{Periodization}\label{sec:periodization_appendix}
\begin{lemma}\label{app:fact_periodization}
	For $\lambda>0$ and $\psi\in \BV_{\emph{loc}}$ we have $\Per_\lambda \psi\in \BV_{\emph{loc}}$.
\end{lemma}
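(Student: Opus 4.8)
The plan is to set $g:=\Per_\lambda\psi$ and to exploit that $g$ is $2\lambda$-periodic by construction, so that it suffices to control its total variation over a single period and then propagate this bound to every finite interval. Throughout I write $V_I(g)$ for the total variation of $g$ on a finite interval $I$, and I use the two standard facts that the total variation is monotone under inclusion of intervals and additive over adjacent intervals, $V_{[a,c]}(g)=V_{[a,b]}(g)+V_{[b,c]}(g)$ whenever $a<b<c$.

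First I would establish that $g$ has finite total variation on the closed period $[-\lambda,\lambda]$. On this interval $g$ coincides with $\psi$ except at the two endpoints $\pm\lambda$, where $g$ takes the finite value $\tfrac12\big(\psi(-\lambda^+)+\psi(\lambda^-)\big)$ dictated by the definition of $\Per_\lambda$ together with periodicity. Since $\psi\in\BV_{\text{loc}}$ is of bounded variation on $[-\lambda,\lambda]$, and since altering a function of bounded variation at finitely many points (to finite values) again yields a function of bounded variation whose variation differs only by a finite amount, it follows that $V_{[-\lambda,\lambda]}(g)<\infty$.

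Next I would transfer this bound to every translated period. By $2\lambda$-periodicity, $V_{[(2k-1)\lambda,(2k+1)\lambda]}(g)=V_{[-\lambda,\lambda]}(g)$ for every $k\in\Z$. Given an arbitrary finite interval $[a,b]$, I choose integers $k_0\le k_1$ with $[a,b]\subseteq[(2k_0-1)\lambda,(2k_1+1)\lambda]$; additivity over the finitely many periods then gives
\[
	V_{[a,b]}(g)\le\sum_{k=k_0}^{k_1}V_{[(2k-1)\lambda,(2k+1)\lambda]}(g)=(k_1-k_0+1)\,V_{[-\lambda,\lambda]}(g)<\infty,
\]
so $g$ is of bounded variation on every finite interval. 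Finally, to confirm membership in $\BV_{\text{loc}}$ I would verify the normalization convention: at an interior point of a period $g=\psi$ is already normalized, while at a boundary point $(2k+1)\lambda$ the one-sided limits of $g$ are $\psi(\lambda^-)$ from the left and, by periodicity, $\psi(-\lambda^+)$ from the right, whose average is exactly the value assigned by $\Per_\lambda$. The only delicate point is the bookkeeping of the jump contributions at the period boundaries, which is precisely where the averaging convention built into the definition of $\Per_\lambda$ is needed; everything else is routine use of the monotonicity and additivity of the total variation.
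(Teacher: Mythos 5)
Your proof is correct and takes essentially the same route as the paper: both arguments reduce to showing that $\Per_\lambda\psi$ has finite variation over a single period $[-\lambda,\lambda]$, where it differs from $\psi$ only at the endpoints --- the paper via an explicit partition estimate $V(f,P)\le V(\psi,P)+|\psi(-\lambda)-f(-\lambda)|+|f(\lambda)-\psi(\lambda)|$, you by invoking the general fact that altering a $\BV$ function at finitely many points preserves bounded variation. Your explicit propagation to arbitrary finite intervals via periodicity and additivity, and the check of the normalization convention at the period boundaries, are details the paper leaves implicit; they are correct and harmless.
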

\begin{proof}
	For a function $f\colon[a,b]\to \R$ and a partition $P$ of some finite interval $[a,b]$ we denote by $V(f,P)$ the variation of $f$ with respect to $P$, and by $V(f)$ the total variation of $f$ on $[a,b]$.
	Now, for $\psi\in \BV_{\text{loc}}$ and $\lambda>0$ consider $f:=\Per_\lambda \psi$.
	It remains to show that $V(f|_{[-\lambda,\lambda]})$ is a finite number.
	Therefore, let 
	\begin{align*}
		P=\{-\lambda=x_0,x_1,...,x_{k-1},x_k=\lambda\}
	\end{align*}
	be a partition of $[-\lambda,\lambda]$.
	Then,
	\begin{align*}
			V(f,P)
			&=\sum_{i=1}^k|f(x_i)-f(x_{i-1})|\\
			&\le\sum_{i=1}^k|\psi(x_i)-\psi(x_{i-1})|
			+|\psi(-\lambda)-f(-\lambda)|+|f(\lambda)-\psi(\lambda)|\\[2mm]
			&=V(\psi,P)+|\psi(-\lambda)-f(-\lambda)|+|f(\lambda)-\psi(\lambda)|.
	\end{align*}
	Thus, taking the supremum among such partitions, we conclude that
	\begin{align*}
		V(f|_{[-\lambda,\lambda]})
		=V(\psi|_{[-\lambda,\lambda]})+|\psi(-\lambda)-f(-\lambda)|+|f(\lambda)-\psi(\lambda)|<\infty.
	\end{align*}
\end{proof}
%

\section{Upper bound for $K_s$}\label{sec:upper_bound_appendix}
Recall the representation of the combinatorial constant $K_s$ in \eqref{eq:rep_ks}.
We want to find an estimate for the following sum, cf. equation \eqref{eq:rep_ks_sum}:
\begin{align*}
	\sum_{k=0}^{s-1} \frac{(s+k)!\cdot(2s-k-1)!}{(2k+2)!\cdot(s-k-1)!\cdot(2s-2k)!\cdot k!}
	=\frac{K_s}{2^{2s}\cdot (s+1)!\cdot (2s)^2}.
\end{align*}
For the summand we calculate
\begin{align}\label{eq:calc_ks1}
	\begin{split}
		&\frac{(s+k)!\cdot(2s-k-1)!}{(2k+2)!\cdot(s-k-1)!\cdot(2s-2k)!\cdot k!}\\
		=\ &\frac{(s+k)!\cdot(2s-k-1)!}{s!\cdot k!\cdot(s-k-1)!\cdot s!}\cdot\frac{s!\cdot s!}{(2s+2)!}\cdot\binom{2s+2}{2k+2}.
	\end{split}
\end{align}
Recall Vandermonde's theorem, see e.g. \cite[Equation (2.43)]{Seaborn:1991aa}:
\begin{align*}
	\sum_{k=0}^m \frac{(a)_k}{k!}\frac{(b)_{m-k}}{(m-k)!}
	=\frac{(a+b)_m}{m!},
	\quad a,b\in\C,\ m\ge 0.
\end{align*}
In particular, for $m=s-1$ and $a=b=s+1,\ s\ge 1$, we obtain
\begin{align}\label{eq:calc_ks2}
	\sum_{k=0}^{s-1} 
	\frac{(s+k)!\cdot(2s-k-1)!}{s!\cdot k!\cdot(s-k-1)!\cdot s!}
	=\binom{3s}{s-1}.
\end{align}
Hence, since
\begin{align*}
	\binom{2s+2}{2s-2k}
	\le \binom{2s+2}{s}
	\quad\text{for}\quad 
	0\le k\le s-1,\ s\ge 2,
\end{align*}
by \eqref{eq:calc_ks1} and \eqref{eq:calc_ks2} we conclude that 
\begin{align*}
	\sum_{k=0}^{s-1} \frac{(s+k)!\cdot(2s-k-1)!}{(2k+2)!\cdot(s-k-1)!\cdot(2s-2k)!\cdot k!}
	&\le\frac{s!\cdot s!}{(2s+2)!}\binom{2s+2}{s}\binom{3s}{s-1}\\
	&=\frac{(3s)!}{s!\cdot(2s+2)!}\cdot \frac{2s}{s+2}.
\end{align*}
This proves that
\begin{align*}
	K_s
	\le \frac{2^{2s+1}\cdot s^2\cdot (3s)!}{ (2s+1)!}\cdot \frac{2s}{s+2},
	\quad s\ge 2.
\end{align*}
Consequently, the true value of $K_s$ is overestimated by the factor $2s/(s+2)$.

\section{Computing Fourier Integrals using the FFT}\label{sec:fft_appendix}
In \S\ref{sec:numeric} we presented numerical results for reconstructions based on windowed Fourier coefficients and windowed series, respectively.
For the computation of Fourier-type integrals, such as
\begin{align}\label{fourier:type:integral}
	I^{(k)}(\psi)
	:=\int_{t-\lambda}^{t+\lambda} \psi(x)w(x-t)e^{-i\xi\frac{\pi}{\lambda}x}\ \mathrm{d}x,\quad \xi=\frac{k}{M},\ k\in\Z,\ M\in\N,
\end{align}
we used the fast Fourier transform (FFT).
Let $v = (v_1,...,v_d)\in\C^d$ and
\begin{align}\label{dft}	
	\hat v_l
	:=\sum_{j=1}^d v_{j}e^{-2\pi i \cdot (j-1)(l-1)/d},\quad l\in\{1,...,d\}.
\end{align}
For the computation of the integral $I^{(k)}(\psi)$ let us consider the \textcolor{black}{composite} trapezoidal rule on a uniform grid.
Let $N\in\N$ be a power of 2, as well as $m\in \N,\ m\le N-1$ and
\begin{align*}
	x_j
	:=t-\lambda+\Delta\cdot j,\quad 
	\Delta
	:= \frac{2\lambda}{m},\quad 
	j\in\{0,1,...,m\}.
\end{align*} 
The trapezoidal rule with grid $\{x_j\}_{j\in\{0,1,...,m\}}$ yields the following approximation:
\begin{align*}
	I^{(k)}(\psi)
	\approx e^{-i\frac{k}{M}\cdot\frac{\pi}{\lambda}t}e^{i\frac{k}{M}\pi}\Delta\Bigg(\frac{\psi(t+\lambda)w(\lambda)}{2}e^{-2\pi i\frac{k}{M}}-\frac{\psi(t-\lambda)w(-\lambda)}{2}\dots\\
		+\sum_{j=1}^m\psi(x_{j-1})w(x_{j-1}-t)e^{-2\pi i\frac{k}{Mm}(j-1)}\Bigg).
\end{align*}
Consequently, if $k=mn$ for some $n\in \{0,1,...,N-1\}$, as well as $M=N$ and $v_j:=\psi(x_{j-1})w(x_{j-1}-t)$ for $j=1,...,m,\ v_j:=0$ for $j=m+1,...,N$, then,
\begin{align*}
	I^{(k)}(\psi)
	&\approx e^{-i\frac{mn}{N}\cdot\frac{\pi}{\lambda}t}e^{i\frac{mn}{N}\pi}\Delta\left(r_1e^{-2\pi i\frac{mn}{N}}-r_2+\sum_{j=1}^N v_je^{-2\pi i(j-1)n/N}\right)\\
	&=e^{-i\frac{mn}{N}\cdot\frac{\pi}{\lambda}t}e^{i\frac{mn}{N}\pi}\Delta\Big(r_1e^{-2\pi i\frac{mn}{N}}-r_2+\hat v_{n+1}\Big),
\end{align*}
where the constants $r_1,r_2\in\R$ are given by
\begin{align*}
	r_1
	=\frac{\psi(t+\lambda)w(\lambda)}{2}
	\quad\text{and}\quad
	r_2
	=\frac{\psi(t-\lambda)w(-\lambda)}{2}.
\end{align*}
In particular, the vector $\hat v$ can be calculated with the FFT.
For sufficiently large values of $m$ and $N$ we get
\begin{align*}
	\frac{1}{2\lambda}\int_{t-\lambda}^{t+\lambda} \psi(x)w(x-t)e^{-i\xi\frac{\pi}{\lambda}x}\ \mathrm{d}x
	\approx \frac{e^{-i\xi\frac{\pi}{\lambda}t}e^{i\xi\pi}}{m}\left(r_1e^{-2\pi i\xi}-r_2+\hat v_{n+1}\right).
\end{align*}
\textcolor{black}{Recall that the window $w$ is compactly supported.
Provided that both the functions $\psi$ and $w$ are smooth on $(t-\lambda,t+\lambda)$, the trapezoidal rule gives accurate results.
The actual rates of convergence are based on the Euler-Maclaurin formula and can be found e.g. in \cite[chapter 2.9]{Davis:1984aa}.
In particular, the difference between the exact Fourier coefficients and their discrete approximation using the trapezoidal rule is known to be spectrally small, see \cite[Equation (1.5)]{Gottlieb:1985aa}.
}

\section{Oscillations of the coefficients}\label{sec:oscillations_appendix}
\begin{figure}
	\includegraphics{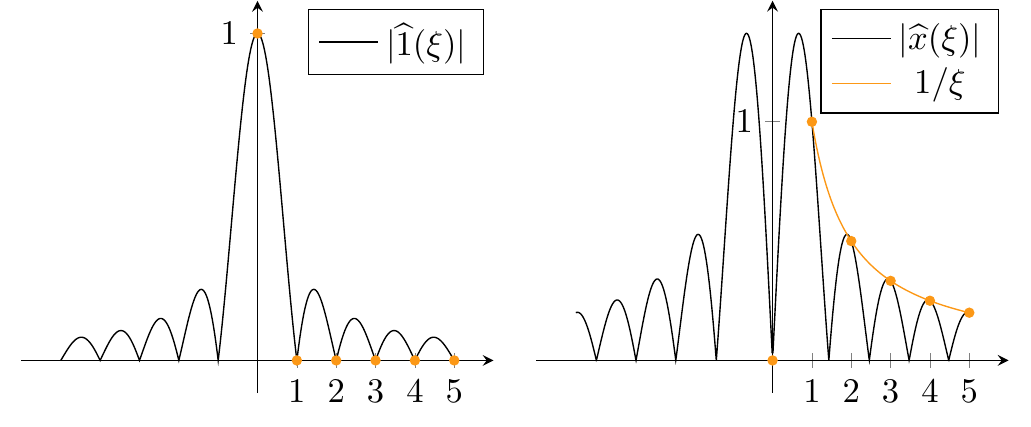}
	\caption{Absolute values of the Fourier coefficients (bullets) for $f(x)=1$ (left) and $f(x)=x$ (right).
		The extension of the domain of the Fourier coefficients leads to a non-trivial function in $\xi$, but the restriction to integer values might result in a smooth decay (orange line).}
	\label{fig:sinc}
\end{figure}
\begin{figure}
	\includegraphics{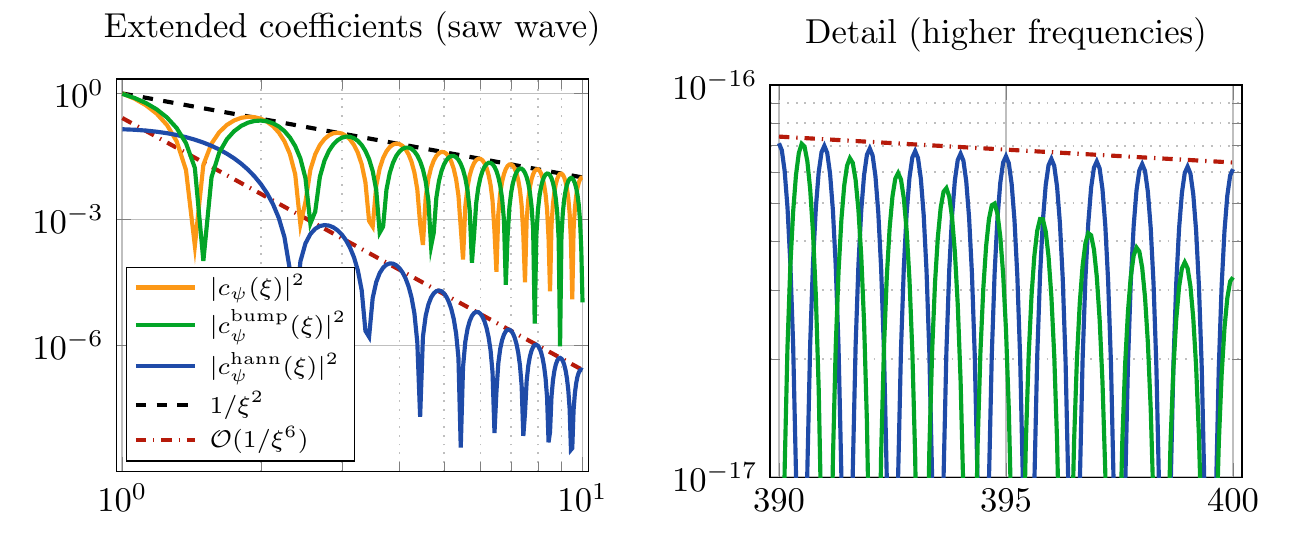}
	\caption{Due to the extension of the domain of the coefficients, we are able to resolve the pattern (cf. upper left plot in Figure~\ref{fig:saw}).
		We recall that the bump-windowed coefficients (green) have a fast asymptotic decay.
		The left plot only shows low frequencies $\xi$, and in the right plot we observe that the bump coefficients fall below the Hann coefficients.}
	\label{fig:saw_generalized}
\end{figure}

We focus once more on the windowed coefficients $c_\psi^w$.
In the plot at the upper left hand side of Figure~\ref{fig:saw} the green line \textcolor{black}{falls in a trembling way.}
To explain this phenomenon, we extend the domain of the Fourier coefficients.
For a $2\pi$-periodic function $f\in\BV_{\text{loc}}$ and $\xi\in\R$ consider the number
\begin{align*}
	\widehat f(\xi)
	:=\frac{1}{2\pi}\int_{-\pi}^\pi f(x)e^{-i\xi x}\ \mathrm{d}x.
\end{align*}
This means, that we calculate the Fourier coefficients not only for integer values, but for all real numbers $\xi$.
For example, the extended Fourier coefficients of the function $f\equiv 1$ are given by
\begin{align*}
	\widehat 1(\xi)
	:=\frac{1}{2\pi}\int_{-\pi}^\pi e^{-i\xi x}\ \mathrm{d}x
	=\frac{\sin(\pi\xi)}{\pi\xi}
	=\operatorname{sinc(\xi)}.
\end{align*}
In particular, if $k$ is an integer, we obtain the simple Fourier coefficients and
\begin{align*}
	|\widehat 1(k)|=
	\begin{cases}
		1,& \text{if}\ k=0,\\
		0,& \text{else}.
	\end{cases}
\end{align*}
As we see in the left plot of Figure~\ref{fig:sinc}, for $k\ne 0$ the simple Fourier coefficients of $f\equiv 1$ correspond to the zeros of $\xi\mapsto|\operatorname{sinc(\xi)}|$.
For the saw wave in \S\ref{sub:saw_wave} we can do the same calculation.
Here we obtain
\begin{align*}
	\widehat x(\xi)
	:=\frac{1}{2\pi}\int_{-\pi}^\pi xe^{-i\xi x}\ \mathrm{d}x
	=i\cdot\frac{(\pi\xi\cos(\pi\xi)-\sin(\pi\xi))}{\pi\xi^2}.
\end{align*}
Therefore, if $k\ne 0$ is an integer, we conclude that
\begin{align*}
	|\widehat x(k)|
	=\frac{1}{k}.
\end{align*}
Thus, the coefficients of the saw wave function have a smooth decay, as we see at the right hand side of Figure~\ref{fig:sinc} (orange line).
We computed the extended (windowed) coefficients for $\xi\in[1,10]$ and $\xi\in[390,400]$ for the saw wave.
The result can be found in Figure~\ref{fig:saw_generalized}.
By extending the domain of the Fourier coefficients, we observe that the trembling also occurs for the other coefficients (plain and hann).


\begin{thebibliography}{MRS10}

\bibitem[BN71]{Butzer:1971aa}
Paul~L. Butzer and Rolf~J. Nessel.
\newblock {\em Fourier analysis and approximation}.
\newblock Academic Press, New York-London, 1971.
\newblock Volume 1: One-dimensional theory, Pure and Applied Mathematics, Vol.
  40.

\bibitem[Boy96]{Boyd:1996aa}
John Boyd.
\newblock The erfc-log filter and the asymptotics of the euler and vandeven sum
  accelerations.
\newblock volume ns, Proceedings of the Third International Conference on
  Spectral and High Orde Methos, pages 267--276, 05 1996.

\bibitem[Boy06]{Boyd:2006aa}
John~P. Boyd.
\newblock Asymptotic {F}ourier coefficients for a {$C^\infty$} bell
  (smoothed-``top-hat'') \& the {F}ourier extension problem.
\newblock {\em J. Sci. Comput.}, 29(1):1--24, 2006.

\bibitem[Chu92]{Chui:1992aa}
Charles~K. Chui.
\newblock {\em An introduction to wavelets}, volume~1 of {\em Wavelet Analysis
  and its Applications}.
\newblock Academic Press, Inc., Boston, MA, 1992.

\bibitem[DIS00]{Damour:2000aa}
Thibault Damour, Bala~R. Iyer, and B.~S. Sathyaprakash.
\newblock {Frequency-domain P-approximant filters for time-truncated inspiral
  gravitational wave signals from compact binaries}.
\newblock {\em Phys. Rev. D}, 62:084036, Sep 2000.

\bibitem[DR84]{Davis:1984aa}
Philip~J. Davis and Philip Rabinowitz.
\newblock {\em Methods of numerical integration}.
\newblock Computer Science and Applied Mathematics. Academic Press, Inc.,
  Orlando, FL, second edition, 1984.

\bibitem[Edw82]{Edwards:1982aa}
R.~E. Edwards.
\newblock {\em Fourier series. {V}ol. 2}, volume~85 of {\em Graduate Texts in
  Mathematics}.
\newblock Springer-Verlag, New York-Berlin, second edition, 1982.
\newblock A modern introduction.

\bibitem[Gab46]{Gabor:1946aa}
D.~Gabor.
\newblock {Theory of communication. Part 1: The analysis of information}.
\newblock {\em Journal of the Institution of Electrical Engineers - Part III:
  Radio and Communication Engineering}, 93(26):429--441, November 1946.

\bibitem[Gou56]{Gould:1956aa}
H.~W. Gould.
\newblock Some generalizations of {V}andermonde's convolution.
\newblock {\em Amer. Math. Monthly}, 63:84--91, 1956.

\bibitem[Gr{\"o}01]{Grochenig:2001aa}
Karlheinz Gr{\"o}chenig.
\newblock {\em Foundations of time-frequency analysis}.
\newblock Applied and Numerical Harmonic Analysis. Birkh\"{a}user Boston, Inc.,
  Boston, MA, 2001.

\bibitem[GS00]{Griebel:2000aa}
Michael Griebel and Marc~Alexander Schweitzer.
\newblock A particle-partition of unity method for the solution of elliptic,
  parabolic, and hyperbolic {PDE}s.
\newblock {\em SIAM J. Sci. Comput.}, 22(3):853--890, 2000.

\bibitem[GT85]{Gottlieb:1985aa}
David Gottlieb and Eitan Tadmor.
\newblock {\em Recovering Pointwise Values of Discontinuous Data within
  Spectral Accuracy}, pages 357--375.
\newblock Birkh{\"a}user Boston, Boston, MA, 1985.

\bibitem[Har78]{Harris:1978aa}
F.~J. Harris.
\newblock On the use of windows for harmonic analysis with the discrete fourier
  transform.
\newblock {\em Proceedings of the IEEE}, 66(1):51--83, Jan 1978.

\bibitem[Jac94]{Jackson:1994aa}
Dunham Jackson.
\newblock {\em The theory of approximation}, volume~11 of {\em American
  Mathematical Society Colloquium Publications}.
\newblock American Mathematical Society, Providence, RI, 1994.
\newblock Reprint of the 1930 original.

\bibitem[Kai11]{Kaiser:2011aa}
Gerald Kaiser.
\newblock {\em A friendly guide to wavelets}.
\newblock Modern Birkh\"{a}user Classics. Birkh\"{a}user/Springer, New York,
  2011.
\newblock Reprint of the 1994 edition.

\bibitem[Lee13]{Lee:2013aa}
John~M. Lee.
\newblock {\em Introduction to smooth manifolds}, volume 218 of {\em Graduate
  Texts in Mathematics}.
\newblock Springer, New York, second edition, 2013.

\bibitem[Mal09]{Mallat:2009aa}
St\'{e}phane Mallat.
\newblock {\em A wavelet tour of signal processing}.
\newblock Elsevier/Academic Press, Amsterdam, third edition, 2009.
\newblock The sparse way, With contributions from Gabriel Peyr\'{e}.

\bibitem[Mar16]{Markoff:1916ab}
W.~Markoff.
\newblock {{\"U}ber Polynome, die in einem gegebenen Intervalle m{\"o}glichst
  wenig von Null abweichen. ({\"U}bersetzt von Dr. J. Grossmann)}.
\newblock {\em Mathematische Annalen}, 77:213--258, 1916.

\bibitem[MRS10]{McKechan:2010aa}
D~J~A McKechan, C~Robinson, and B~S Sathyaprakash.
\newblock A tapering window for time-domain templates and simulated signals in
  the detection of gravitational waves from coalescing compact binaries.
\newblock {\em Classical and Quantum Gravity}, 27(8):084020, 2010.

\bibitem[Ore38]{Ore:1938aa}
Oystein Ore.
\newblock On functions with bounded derivatives.
\newblock {\em Trans. Amer. Math. Soc.}, 43(2):321--326, 1938.

\bibitem[Sea91]{Seaborn:1991aa}
James~B. Seaborn.
\newblock {\em Hypergeometric functions and their applications}, volume~8 of
  {\em Texts in Applied Mathematics}.
\newblock Springer-Verlag, New York, 1991.

\bibitem[Tad86]{Tadmor:1986aa}
Eitan Tadmor.
\newblock The exponential accuracy of {F}ourier and {C}hebyshev differencing
  methods.
\newblock {\em SIAM J. Numer. Anal.}, 23(1):1--10, 1986.

\bibitem[Tad07]{Tadmor:2007aa}
Eitan Tadmor.
\newblock Filters, mollifiers and the computation of the {G}ibbs phenomenon.
\newblock {\em Acta Numer.}, 16:305--378, 2007.

\bibitem[Tan06]{Tanner:2006aa}
Jared Tanner.
\newblock Optimal filter and mollifier for piecewise smooth spectral data.
\newblock {\em Math. Comp.}, 75(254):767--790, 2006.

\bibitem[Tu11]{Tu:2011aa}
Loring~W. Tu.
\newblock {\em An introduction to manifolds}.
\newblock Universitext. Springer, New York, second edition, 2011.

\bibitem[Tuk67]{Tukey:1967aa}
John~W. Tukey.
\newblock An introduction to the calculations of numerical spectrum analysis.
\newblock In {\em Spectral {A}nalysis {T}ime {S}eries ({P}roc. {A}dvanced
  {S}em., {M}adison, {W}is., 1966)}, pages 25--46. John Wiley, New York, 1967.

\end{thebibliography}

\end{document}